\newcommand{\un}{\text{ $\underline{\triangleright}$ }}
\newcommand{\ovr}{\text{ $\overline{\triangleright}$ }}
\theoremstyle{definition}
\newtheorem{theorem}{Theorem}[subsection]
\newtheorem{example}{Example}[subsection]
\newtheorem{definition}{Definition}[subsection]
\newtheorem{proposition}{Proposition}[subsection]
\newtheorem{remark}{Remark}[subsection]
\newtheorem{corollary}{Corollary}
\title{Biquandle Virtual Brackets and Virtual Knotoids}
\author{Neslihan G\"ug\"umc\"u, Hamdi Kayaslan}
\begin{document}
	\maketitle
    \onehalfspacing
	\begin{abstract}
	In this paper, we introduce invariants of virtual knotoids based on biquandles and biquandle virtual brackets. We show that one of these invariants, namely biquandle virtual bracket matrix, is a proper enhancement of the other invariants introduced in this paper.
	\end{abstract}

	\section{Introduction}
	\paragraph{}

    Virtual knot theory, introduced by L.H. Kauffman \cite{kauffman2012introduction} studies knots and links in thickened surfaces up to homeomorphisms of thickened surfaces plus handle stabilization/destabilization (adding/removing hollow handles). Many classical knot invariants have been extended to virtual knots including invariants based on biquandle colorings of oriented knots and links. See \cite{nelson2017quantum, nelson2019biquandle}.  

   Knotoids are open curves immersed in surfaces having two endpoints that generalize classical knots, and are proven to provide a natural modeling of entangled structures such as proteins and polymers. The theory of knotoids was introduced by V. Turaev \cite{turaev2012knotoids} and has been extensively studied by many researchers. See \cite{ goundaroulis2017studies,goundaroulis2017topological,gugumcu2017new, adams2019knots, gugumcu2021parity,gugumcu2021quantum, gugumcu2022invariants,moltmaker2022framed}.
   
   It is natural to examine knotoids in the context of virtual knot theory. In \cite{gugumcu2017new}, the authors introduced knotoids in surfaces taken up to handle stabilization. Like in the case of virtual knots, it was shown in \cite{gugumcu2017new} that the theory of knotoids in surfaces up to handle stabilization/destabilization is equivalent to the theory of knotoids in $S^2$ with classical and \textit{virtual} crossings that are considered up to Reidemeister moves plus the detour move (generalized Reidemeister moves). With this correspondence, the theory of surface knotoids is referred as virtual knotoid theory. Virtual knotoids were classified by A. Bartholomew up to seven crossings in \cite{virk2}. Recently in \cite{gugumcu2025virtual}, we were able to give a three-dimensional interpretation of virtual knotoids as virtual arcs embedded in thickened surfaces with two endpoints attached on two specified line segments. This interpretation proves that classical knotoid theory embeds into virtual knotoid theory properly. The virtual knotoid theory has not been extensively explored yet, and constructing invariants for virtual knotoids is important in this manner.

        A biquandle $X$ is an algebraic structure with two binary operations satisfying three axioms motivated by Reidemeister moves of classical links. Biquandle brackets defined over a commutative and unitary ring $R$ were introduced by S. Nelson \cite{nelson2017quantum} for $X$- colored knots and links in analogy with the Kauffman bracket. In the Kauffman bracket case, $X=\{1\}$ and $R\,$ is chosen to be the polynomial ring over integers whereas in biquandle bracket case, $X$ can be any finite biquandle, and $R$ can be chosen as any commutative and unitary ring. In \cite{nelson2019biquandle}, this invariant is generalized to virtual knots and links as \textit{biquandle virtual brackets}, with a modification in the skein relation of biquandle brackets where virtual crossings are considered as a smoothing type. 

Later in \cite{gugumcu2019biquandle}, the \textit{biquandle counting invariant} of knotoids is enhanced to the \textit{biquandle counting matrix} by fixing the biquandle colors on the tail and head semi-arcs of a knotoid diagram. In \cite{gugumcu2021biquandle}, biquandle brackets are utilized to obtain the \textit{biquandle bracket matrix} invariant for knotoids that enhances the biquandle counting matrix invariant for knotoids. 

In this paper, we utilize biquandle virtual brackets to enhance the biquandle counting invariant and the biquandle counting matrix and also generalize biquandle bracket matrix for virtual knotoids. We show and compare the strength of our invariants presented with explicit calculations in the case where the ground ring $R$ is chosen to be a number ring.
    
    The paper is organized as follows. In Section \ref{knotoid}, we review the basics of classical and virtual knotoid theories. We review the biquandles in Section \ref{biq}, and study invariants arising from biquandle colorings of virtual knotoids in Section \ref{coloringinv}. In Section \ref{bvbs}, we recall the definition of a biquandle virtual bracket, and use it to introduce new invariants of virtual knotoids as an enhancement of invariants based on biquandle colorings. In this section, we also provide examples that demonstrate the classification abilities of our invariants by using Bartholomew's virtual knotoids table. 

	\section{Knotoids}
    \label{knotoid}
    \label{prelims}
    \paragraph{} In this section, we recall some fundamental notions of knotoids and virtual knotoids.
    
    \begin{definition}\cite{turaev2012knotoids}
        A \textit{knotoid diagram} on a surface $\,\Sigma\,$ is an immersed open curve with two endpoints and a finite number of (transversal) self-intersections. Each self-intersection is endowed with over/under passage information and called a \textit{crossing}. The endpoints of a knotoid diagram are specifically called \textit{tail} and \textit{head} of the diagram. A knotoid diagram is assumed to be oriented from its tail to its head. A \textit{trivial knotoid diagram} is a knotoid diagram with no crossings.
    \end{definition}

    \paragraph{} Knotoid diagrams are considered up to \textit{Reidemeister moves} that are shown in Figure \ref{crm}. These moves are performed away from the endpoints. There are two forbidden moves for knotoid diagrams that consist of pulling/pushing an arc adjacent to an endpoint under/over a transversal arc. These moves are forbidden because any knotoid diagram in any surface can be turned into a trivial knotoid diagram via a sequence of these moves.
    
    \begin{figure}[h!]
    \centering
        \begin{subfigure}{0.4\textwidth}
        \centering
            \includegraphics[width=50 mm]{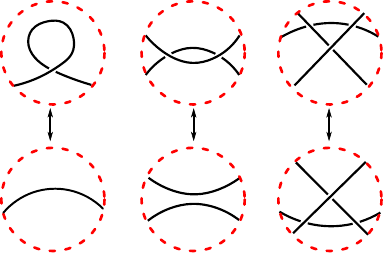}
            \caption{Classical Reidemeister moves.}
            \label{crm}
        \end{subfigure}
        \begin{subfigure}{0.5\textwidth}
        \centering
            \includegraphics[width=50 mm]{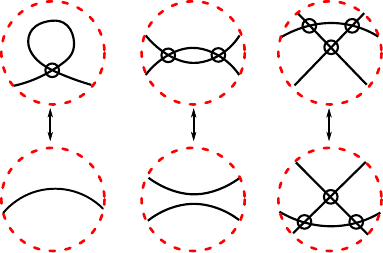}
            \vspace{0.01 cm}
            \caption{Virtual Reidemeister moves.}
            \label{vrm}
        \end{subfigure}
        \begin{subfigure}{0.5\textwidth}
        \centering
        \vspace{0.2cm}
            \includegraphics[width=36 mm]{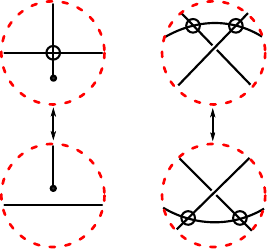}
            \vspace{0.01 cm}
            \caption{Mixed Reidemeister moves.}
            \label{mrm}
        \end{subfigure}
        \caption{Extended Reidemeister moves.}
        \label{erm}
    \end{figure}

    \begin{definition}
        A \textit{knotoid} is an equivalence class of knotoid diagrams in a surface with respect to the equivalence relation induced by Reidemeister moves and isotopy of the surface. The set of knotoids on a surface $\Sigma$ is denoted by $\mathcal{K}(\Sigma)$. A knotoid in $S^2$ is called a \textit{classical} knotoid.
    \end{definition}

     \paragraph{} Classical knotoids were extended to virtual knotoids in \cite{gugumcu2017new}. Let us make a brief review of this extension.

    \begin{definition}
        A \textit{virtual knotoid diagram} is a knotoid diagram in $S^2$ with a finite number of classical and virtual crossings. A virtual crossing is indicated by a circle around the crossing point without under or over information. See Figure \ref{virtk1} for an example of a virtual knotoid diagram with one virtual crossing and two classical crossings.

       \begin{figure}[h!]
		\centering
        \includegraphics[width=30 mm]{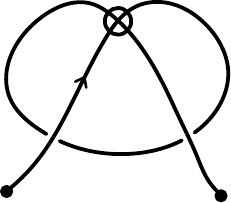}
			\caption{A virtual knotoid diagram.}
			\label{virtk1}
	\end{figure}

  Two virtual knotoid diagrams are considered to be \textit{equal} if one can be turned into another via a sequence of \textit{extended Reidemeister moves} shown in Figure \ref{erm} and isotopy of $S^2$. Equivalence classes of virtual knotoid diagrams with respect to this equivalence relation are called \textit{virtual knotoids}. 
    \end{definition}

It was shown in \cite{gugumcu2017new} that virtual knotoids can be interpreted as knotoids in higher genus closed, connected, orientable (c.c.o) surfaces that are considered up to \textit{stable equivalence}, that is induced by Reidemeister moves in the surfaces, isotopy of the surfaces and the addition/subtraction of empty handles in the complement of the diagrams. This connection between virtual knotoids and knotoids in higher genus c.c.o surfaces was established via \textit{abstract knotoid diagrams}. See \cite{gugumcu2017new} for details on this connection. Later in \cite{gugumcu2025virtual}, \textit{virtual arcs} were introduced. A \textit{virtual arc} is an open-ended curve embedded in a thickened c.c.o surface, whose endpoints are considered to be attached on a pair of line segments in the thickened surface. See Figure \ref{tthick}. 
It was shown in \cite{gugumcu2025virtual} that virtual knotoids are in one-to-one correspondence with virtual arcs in thickened surfaces, considered up to \textit{virtual arc isotopy} that transforms virtual arcs to each other via stable equivalences that are realized in the complement of the pair of line segments to which the endpoints of virtual arcs are attached.  The readers are referred to \cite{gugumcu2025virtual, chmutov2024thistlethwaite} for more on the subject. We state the following theorem that was deduced from these connections in \cite{gugumcu2025virtual}.

\begin{figure}[h!]
        \centering
        \includegraphics[width=0.33\linewidth]{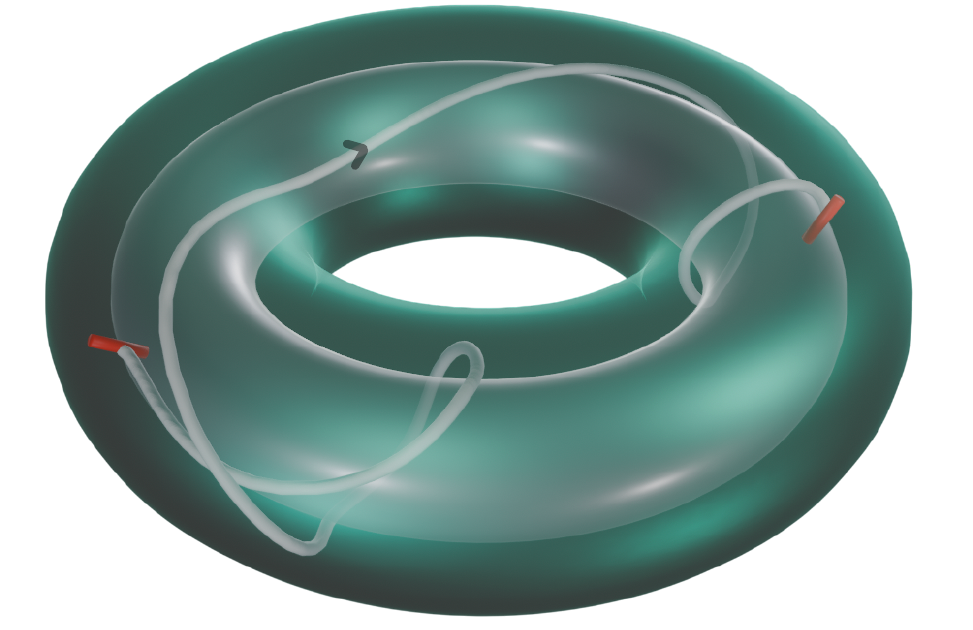}
        \caption{A virtual arc in a thickened torus whose endpoints are attached to two line segments (colored in red).}
        \label{tthick}
    \end{figure}

\begin{theorem}
Two classical knotoid diagrams $K_1$, $K_2$ are equal to each other via a sequence of extended Reidemeister moves if and only if they are related to one another via a sequence of only classical Reidemeister moves. 
\end{theorem}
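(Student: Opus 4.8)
The forward implication is immediate, so the plan focuses on the converse: I must show that allowing virtual crossings and the detour move creates no new equivalences among purely classical knotoid diagrams. Every classical Reidemeister move is already one of the extended moves of Figure \ref{erm}, so only the claim that extended equivalence of two \emph{classical} diagrams can be downgraded to classical equivalence carries content.

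My approach is to pass to the three-dimensional model of \cite{gugumcu2025virtual}. First I would use the correspondence established there to reinterpret each virtual knotoid as a virtual arc in a thickened c.c.o.\ surface, taken up to virtual arc isotopy, namely stable equivalence realized in the complement of the two endpoint line segments. Under this dictionary a classical knotoid diagram is precisely a virtual arc that already embeds in the thickened sphere $S^2 \times I$, the minimal-genus case. Thus $K_1$ and $K_2$ yield virtual arcs $A_1, A_2 \subset S^2 \times I$, and the hypothesis that they are equal via extended Reidemeister moves becomes the statement that $A_1$ and $A_2$ are stably equivalent.

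The heart of the argument is a Kuperberg-type uniqueness theorem: each virtual arc admits a unique irreducible (minimal-genus) representative, and any stable equivalence between two representatives can be routed through this minimal one without increasing genus along the way. Since genus $0$ is minimal and both arcs already live in $S^2 \times I$, irreducibility forces the stable equivalence between $A_1$ and $A_2$ to take place entirely inside $S^2 \times I$, with no handle stabilizations required. Translating back to diagrams, $K_1$ and $K_2$ are then related by a sequence of Reidemeister moves carried out in $S^2$; because virtual crossings and the detour move are exactly the diagrammatic trace of handle stabilizations, none of them appears, and the equivalence uses only classical Reidemeister moves.

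The hard part will be confirming that the Kuperberg irreducibility argument survives the passage from closed virtual links to knotoids. A knotoid carries two endpoints pinned to a pair of line segments, so I would need to check that the destabilizations appearing in the irreducibility proof can always be performed in the complement of those segments, which is precisely why virtual arc isotopy in \cite{gugumcu2025virtual} is defined to occur away from the endpoint segments. Once the minimal-genus representative of a classical knotoid is shown to be irreducible and the standard destabilization procedure is seen to respect the endpoint constraints, the theorem follows directly from the uniqueness of that representative.
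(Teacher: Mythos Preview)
The paper does not supply its own proof of this theorem; it merely states the result and attributes it to \cite{gugumcu2025virtual}, where it is deduced from the three-dimensional model of virtual knotoids as virtual arcs in thickened surfaces. Your proposal is therefore not being compared against a proof in the present paper but against a citation, and your outline is precisely the strategy the paper points to: pass to virtual arcs, invoke a Kuperberg-type minimal-genus uniqueness result adapted to the knotoid setting, and read off that a stable equivalence between two genus-zero representatives can be realised entirely in $S^2\times I$.

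Two small remarks. First, your labelling of ``forward'' and ``converse'' is reversed: as the biconditional is written, the forward direction is \emph{extended equivalence $\Rightarrow$ classical equivalence}, which is the substantive direction, while the converse (classical moves being a subset of extended moves) is the trivial one. You identify the content correctly, only the names are swapped. Second, you are right that the crux is checking that the destabilisation step in the Kuperberg argument can be carried out in the complement of the two endpoint line segments; this is exactly the refinement that \cite{gugumcu2025virtual} has to establish, and it is the only place where the knotoid case genuinely differs from the closed virtual knot case. Your proposal correctly isolates this as the point requiring work, so as a plan it matches what the cited reference does.
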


As a corollary, we can study classical knotoids in the framework of virtual knotoids without altering their topological types (e.g. equivalence classes). Precisely, a classical knotoid can be viewed as a virtual knotoid that can be represented by a classical knotoid diagram.

    \paragraph{} The connected sum operation of classical knotoids, defined in \cite{turaev2012knotoids}, extends to virtual knotoids as follows.

   \begin{definition}
       
  Let $K_i$ be a virtual knotoid in $S^2$  having endpoints  $t_i, h_i$ for $i=1,2$. Let $U$ and $V$ be disk neighborhoods of $h_1$ and $t_2$ that intersect $K_1$ and $K_2$ along the radii of the disks, respectively. The \textit{product} of virtual knotoids $K_1$ and $K_2$, denoted by $K_1K_2$, is the virtual knotoid resulted from gluing $S^2-\text{Int}(U)$ and $S^2-\text{Int}(V)$ along a homeomorphism $\phi:\partial(U)\rightarrow \partial(V)$ such that the intersection point $K_1\cap \partial(U)$ is mapped to the intersection point $K_2 \cap \partial(V)$. $K_1 K_2$ is a virtual knotoid in $S^2$ having tail endpoint $t_1$ and head endpoint $h_2$. See Figure \ref{consum} for an illustration.

   \end{definition}

    \begin{figure}[h!]
        \centering
        \includegraphics[width=0.45\linewidth]{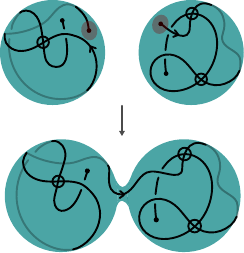}
        \caption{Product of two virtual knotoid diagrams in $S^2$.}
        \label{consum}
    \end{figure}

   Note that the set of all virtual knotoids endowed with the connected sum operation is a monoid.

	\section{Biquandles and Biquandle Coloring Invariants}
    \label{biqinvariants}
    This section is a brief review of biquandles and invariants of knotoids based on biquandles. We first introduce biquandles and provide some examples.
    \subsection{Biquandles}
    \label{biqs}
    We begin with the definition of a biquandle.
	\begin{definition}
		Let $X$ be a set with two operations $\un,\ovr:X\times X\rightarrow X$. If for all $x,y,z\in X$,
		\begin{itemize}
			\item[(1)] $x\un x=x\ovr x$,
			\item[(2)] the following maps are invertible,
			\begin{align*}
				&\alpha_y:X\rightarrow X, \qquad \beta_y:X\rightarrow X,\qquad S:X\text{ x } X\rightarrow X\text{ x }X,\\
				&\quad x\mapsto x\ovr y\qquad x\mapsto x\un y\qquad\quad (x,y)\mapsto (y\ovr x, x\un y)
			\end{align*}
			\item[(3)] and the exchange laws hold, \begin{align*}
				(x\un y)\un(z\un y)=(x\un z)\un(y\ovr z),\\
				(x\un y)\ovr(z\un y)=(x\ovr z)\un(y\ovr z),\\
				(x\ovr y)\ovr(z\ovr y)=(x\ovr z)\ovr(y\un z),
			\end{align*}
		\end{itemize}
		then, $(X,\un,\ovr)$ is called a \textit{biquandle}. A biquandle $(X,\un,\ovr)$ is usually denoted by $X$.
        \label{bq}
    \end{definition}

    \begin{example}
    \label{biqex}
        Let $R$ be a module over the ring $\mathbb{Z}[t^{\pm1},s^{\pm1}]$ and define the operations on $R$ for all $x,y\in R$ as
        \[ x\un y=tx+(r-t)y\quad\text{ and }\quad x\ovr y=rx.\]
        $(R,\un,\ovr)$ determines a biquandle structure that is called an \textit{Alexander biquandle.} 
    \end{example}

 Specifically, let $R=\mathbb{Z}_5$ and $t=2$, $r=4$. Consider the Alexander biquandle on $R$. The operations on $R$ become 
        \[ x\un y= 2(x+y),\quad x\ovr y= 4x.\] 
        \label{biq}

    \begin{definition}
        
Let $X=\{x_1,x_2,...,x_n\}$ be a biquandle. Then the augmented $n\times 2n$ matrix $A=\{a_{i,j}\}$ where for all $i\in \{1,2,...,n\}$, \[a_{i,j}= k \quad \text{if} \quad x_k=\begin{cases}
        x_i\un x_j, & 1\leq j\leq n\\
        x_i\ovr x_j, & n+1\leq j\leq 2n
    \end{cases}\] is called the \textit{biquandle operation matrix} of $X$.
        \end{definition}
    For example, the operation matrix of the biquandle in Example \ref{biqex} is \[\left[\begin{array}{c c c c c | c c c c c}
             4& 1& 3& 5& 2& 4& 4& 4& 4& 4  \\
             1& 3& 5& 2& 4& 3& 3& 3& 3& 3\\
             3& 5& 2& 4& 2& 2& 2& 2& 2& 2\\
             5& 2& 4& 1& 3& 1& 1& 1& 1& 1\\
             2& 4& 1& 3& 5& 5& 5& 5& 5& 5
        \end{array}\right]\]
        where the equivalence class of 0 is represented by 5 in $\mathbb{Z}_5$.
        
    \begin{remark}
        In our examples containing biquandle structures over $\mathbb{Z}_n$, we represent the equivalence class of 0 by $n$ as it is more convenient for the biquandle operation matrices.
    \end{remark}
    
    The axioms of a biquandle (see Definition \ref{bq}) are motivated by classical Reidemeister moves of virtual knot/knotoid diagrams. To see this, one can consider labeling each semi-arc of an oriented virtual knot or knotoid diagram with an element of a set $X$ that is endowed with two operations $\ovr$ and $\un$ in the following way.
    The semi-arcs adjacent to a positive and negative classical crossing of a knot/knotoid diagram are labeled as in Figure \ref{bqconvention} parts (a) and (b), respectively. The semi-arcs adjacent to a virtual crossing are labeled as in part (c) of Figure \ref{bqconvention}. The labels of the semi-arcs do not change through a virtual crossing. The axioms of a biquandle correspond to relations that we get in order to make this labeling a topological invariant. 
    We refer the reader to \cite{nelson2017quantum} for more details.

    \begin{figure}[h!]
			\centering
			\begin{overpic}[width=0.5 \linewidth]{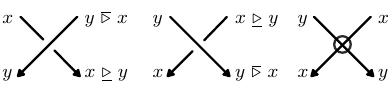}
            \put(10,-2){(a)}
            \put(48,-2){(b)}
            \put(85,-2){(c)}
        \end{overpic}
			\caption{Biquandle labelings around classical and virtual crossings.}
			\label{bqconvention}
		\end{figure}

	\begin{definition}
		Let $K$ be an oriented virtual knotoid diagram whose semi-arcs are labeled by $x_1, x_2, ..., x_m$. The \textit{fundamental biquandle of  K} is the set $\mathcal{B}(K)$ whose elements are congruence classes of finite words generated by the labels of semi-arcs of $K$, up to the biquandle axioms and the relations arising from the crossing labeling conditions in Figure \ref{bqconvention}.

        In Figure \ref{vrandomb}, we give the crossing relations of the fundamental biquandle of the virtual knotoid diagram $K$. The fundamental biquandle of $K$ is given as:

    \[
        \mathcal{B}(K)=\;<a,b,c,d,e\;|\; c\un b=d, b\ovr c=a, d\ovr c=e, c\un d=b>.
    \]
    \begin{figure}[h!]
        \centering
        \includegraphics[width=0.8\linewidth]{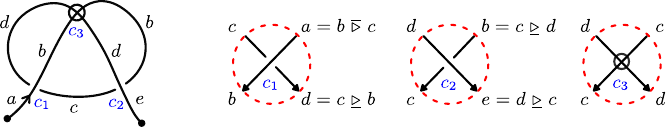}
        \caption{A virtual knotoid diagram and its crossing relations.}
        \label{vrandomb}
    \end{figure}
   \end{definition}
 
	\begin{definition}
		Let X and Y be two biquandles. A map $f:X\rightarrow Y$ is called a \textit{biquandle homomorphism} if for all $x,y\in X$ we have \[f(x\un y)=f(x)\un f(y),\quad f(x\ovr y)=f(x)\ovr f(y).\] The set of biquandle homomorphisms from $X$ to $Y$ is denoted by Hom$(X,Y)$.
	\end{definition}

    \subsection{Biquandle Coloring Invariants of Virtual Knotoids}
    \label{coloringinv}
    
    In this section, we study invariants arising from \textit{colorings (labelings)} of virtual knotoid diagrams by using elements of a finite biquandle. We provide some examples of biquandle colorings for virtual knotoids. 
    
    \begin{definition}
        Let $K$ be a virtual knotoid diagram and $X$ be a finite biquandle. An $X\text{-}coloring$ of $K$ is a labeling of the semi-arcs of $K$ with elements of $X$ such that the labeling around each crossing respects the corresponding relation given in Figure \ref{bqconvention}. The number of $X$-colorings of $K$ is denoted by $\Phi_X^{\mathbb{Z}}(K)$.
    \end{definition}

    \begin{remark}
    If $K$ admits only classical crossings, the first two of the labeling conventions given in Figure \ref{bqconvention} are utilized. 
         In \cite{gugumcu2019biquandle} and \cite{gugumcu2021biquandle}, Nelson and the first author study biquandle coloring invariants of classical knotoids.
    \end{remark}
    
    \begin{remark}
        For our examples in the sequel, we will be using the virtual knotoid table given by Bartholomew in \cite{virk2}. In \cite{virk2}, virtual knotoids are studied with their \textit{labeled peer codes}. We refer the interested reader to \cite{virk2} for more detail on labeled peer codes. In our examples, we first generate virtual knotoid diagrams corresponding to labeled peer codes. The virtual knotoid diagrams are named the same as in \cite{virk2}. We then utilize our Python code for our computations. 
    \end{remark}

        \begin{figure}[h!]
        \begin{subfigure}{0.38\textwidth}
        \centering
            \includegraphics[width=25 mm]{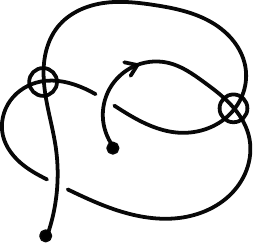}
            \vspace{1 cm}
            \caption{A diagram $K$ of the virtual knotoid 2.1.1.}
            \label{21}
        \end{subfigure}
        \begin{subfigure}{0.5\textwidth}
        \centering
            \includegraphics[width=80 mm]{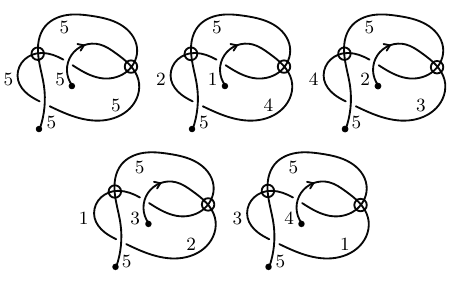}
            \caption{$\mathbb{Z}_5$-colorings of $K$.}
            \label{21color}
        \end{subfigure}
        \caption{}
    \end{figure}

    \begin{example}
        Let $K$ be an oriented diagram of the virtual knotoid 2.1.1 given in Figure \ref{21}, and let $X=\mathbb{Z}_5$ be the Alexander biquandle (see Example \ref{biq}). We order the semi-arcs of $K$ by following the orientation of the diagram, starting from the semi-arc adjacent to the tail of $K$. Then we write out the equations derived from the crossings of $K$, and row reduce the coefficient matrix over $\mathbb{Z}_5,$ and we see that the kernel is generated by (3,2,1,5,5). Then $\Phi_X^{\mathbb{Z}}(K)=5.$ Figure \ref{21color} shows all possible $X$-colorings of $K$.
        \label{bqcolor}
    \end{example}

    Let $X$ be a finite biquandle and $K$ be a virtual knotoid diagram. An $X$-coloring of $K$ can be considered as a biquandle homomorphism $f: \mathcal{B}(K)\rightarrow X$, that is determined by sending each generator of the fundamental biquandle $\mathcal{B}(K)$ to an element of $X$. 
    See Figure \ref{bqqhom} for a local illustration of such mapping. Observe that the images of the generators around the given crossing determine a proper coloring of the semi-arcs at that crossing by the elements of $X$ under a biquandle homomorphism $f:\mathcal{B}(K)\rightarrow X$. 
    
    In fact, there is one-to-one correspondence between $X$-colorings of $K$ and Hom($\mathcal{B}(K),X)$. By this correspondence, we use the elements $f\in$ Hom($\mathcal{B}(K),X$) to denote the $X$-colorings of $K$. We use $K_f$ to denote a virtual knotoid diagram with an $X$-coloring $f$. 

    \begin{figure}[h!]
        \centering
        \includegraphics[width=0.5\linewidth]{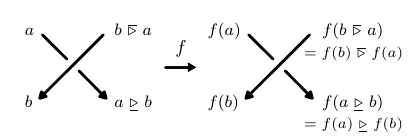}
        \caption{ Fundamental biquandle elements around a crossing and their images under a homomorphism $f$.}
        \label{bqqhom}
    \end{figure}
    \begin{example}
    \label{trivial}
        Let $K$ be a virtual knotoid diagram. Let $1_{\mathcal{B}(K)}$ denote the identity map of the fundamental biquandle $\mathcal{B}(K)$ of $K$. Then, $1_{\mathcal{B}(K)}$ represents the $\mathcal{B}(K)$-coloring of $K$, where each generator of $\mathcal{B}(K)$ colors its corresponding semi-arc in $K$. We call $1_{\mathcal{B}(K)}$ the \textit{trivial fundamental biquandle coloring} of $K$, and denote the trivial fundamental biquandle colored diagram $K$ by $K_{1_{\mathcal{B}}}$.
    \end{example}

    \begin{proposition}
        $\Phi_X^{\mathbb{Z}}(K)$ is an invariant of virtual knotoids.
    \end{proposition}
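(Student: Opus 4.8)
The plan is to show that $\Phi_X^{\mathbb{Z}}(K)$ is unchanged under each of the generating moves of the equivalence relation on virtual knotoid diagrams, namely the classical, virtual, and mixed Reidemeister moves of Figure \ref{erm} together with isotopy of $S^2$. Since these moves generate equality of virtual knotoid diagrams, it suffices to exhibit, for each single move $K\to K'$, a bijection between the set of $X$-colorings of $K$ and the set of $X$-colorings of $K'$. Isotopy of $S^2$ alters neither the combinatorial arrangement of crossings nor the coloring conditions at them, so it preserves colorings trivially; the content lies in the Reidemeister moves.

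Each Reidemeister move is supported in a disk $D$, and $K$ and $K'$ agree outside $D$. The coloring constraints imposed by the conventions of Figure \ref{bqconvention} are local to the crossings, so an $X$-coloring of either diagram is precisely an $X$-coloring of the exterior together with a compatible extension across $D$. Thus it is enough to fix the colors on the semi-arcs entering $D$ and verify, for each move, that the number of valid extensions to the interior of $D$ is the same for $K$ and for $K'$; in fact, that the colors induced on the outgoing boundary semi-arcs agree. Matching these extensions boundary-color by boundary-color then assembles into the desired global bijection.

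First I would treat the classical moves, where the three biquandle axioms of Definition \ref{bq} enter in direct correspondence: axiom (1), $x\un x=x\ovr x$, forces consistency of the single self-crossing produced by the R1 move; axiom (2), the invertibility of $\alpha_y$, $\beta_y$, and $S$, guarantees that for each choice of incoming colors of an R2 move there is exactly one consistent coloring of the internal arcs, matching the coloring on the side with no crossing; and the exchange laws of axiom (3) are exactly the identities needed to show that the three outgoing colors at a triple point agree before and after an R3 move. Next, for the virtual and mixed moves I would use the fact that labels pass unchanged through virtual crossings (part (c) of Figure \ref{bqconvention}), so the boundary data determines the interior uniquely and identically on both sides of a virtual Reidemeister move and of a detour move. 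Because colorings impose no condition at the tail or head, the requirement that the moves be performed away from the endpoints plays no role in counting, and the forbidden moves never need to be invoked.

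The main obstacle is the R3 verification: one must check that the systems of equations coming from the three crossings in the two configurations of the move yield the same input-output color assignment, and this is precisely where the three exchange laws are consumed, tracking each of the two $\un$-compositions and the mixed $\un/\ovr$-composition to the corresponding law. Once this local matching is established for R3, and the comparatively routine verifications for R1, R2, and the virtual and mixed moves are in hand, the boundary-respecting bijections patch together to give $\Phi_X^{\mathbb{Z}}(K)=\Phi_X^{\mathbb{Z}}(K')$, completing the argument. Equivalently, one may phrase the whole proof as the assertion that these local bijections certify $\mathcal{B}(K)$ to be a virtual-knotoid invariant up to isomorphism, whence $\Phi_X^{\mathbb{Z}}(K)=|\mathrm{Hom}(\mathcal{B}(K),X)|$ depends only on the virtual knotoid.
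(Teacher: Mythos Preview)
Your proposal is correct and follows essentially the same approach as the paper: establish, for each generalized Reidemeister move, a bijection between $X$-colorings before and after the move, using the biquandle axioms for the classical moves and the pass-through convention at virtual crossings for the virtual and mixed moves. The paper compresses this into a single sentence asserting that the biquandle axioms are designed so that each $X$-coloring of $K$ corresponds to a unique $X$-coloring of $K'$ under any generalized Reidemeister move; your write-up simply unpacks which axiom handles which move.
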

    \begin{proof}
    The way in which the biquandle axioms are chosen guarantees that an $X$-coloring of a virtual knotoid diagram $K$ corresponds to a unique $X$-coloring of the virtual knotoid diagram $K'$ which is obtained by performing a generalized Reidemeister move on $K$. This implies that the number of $X$-colorings of virtual knotoid diagrams is preserved under generalized Reidemeister moves.
    \end{proof}

    \begin{example}
    \label{z332}
        Consider the biquandle $X=\mathbb{Z}_3$ with the following operation matrix
        \[\left[\begin{array}{c c c | c c c}
            3&2&1&3&3&3\\
            2&1&3&1&1&1\\
            1&3&2&2&2&2\\
        \end{array}\right].\]
        Our Python codes show that the virtual knotoid diagram representing the virtual knotoid 3.1.2, given in Figure \ref{32}, has three $Z_3$-colorings (see these three colorings in Figure \ref{32color}) and the virtual knotoid diagram in Figure \ref{21} has no $Z_3$-colorings. Therefore, the biquandle counting invariant distinguishes the virtual knotoids 3.1.2 and 2.1.1 from each other. 
        \end{example}

    \begin{figure}[h!]
        \begin{subfigure}{0.4\textwidth}
        \centering
            \includegraphics[width=25 mm]{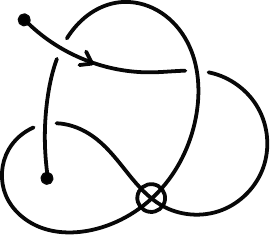}
            \vspace{1 cm}
            \caption{A diagram $K$ of the virtual knotoid 3.1.2.}
            \label{32}
        \end{subfigure}
        \begin{subfigure}{0.5\textwidth}
        \centering
            \includegraphics[width=60 mm]{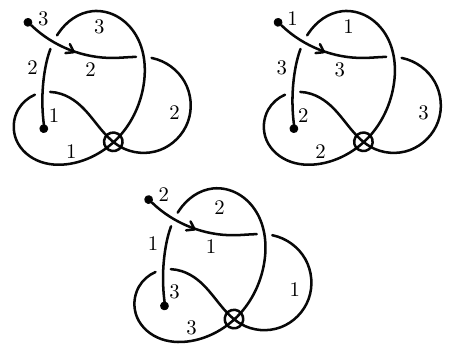}
            \caption{$\mathbb{Z}_3$-colorings of $K$.}
            \label{32color}
        \end{subfigure}
        \caption{Distinguishing two virtual knotoids by biquandle colorings.}
    \end{figure}

    The main difference between knots and knotoids is that knotoids have two special semi-arcs that are the semi-arcs adjacent to the tail and head of the knotoid. 
    In \cite{gugumcu2019biquandle}, the biquandle counting invariant of classical knotoids is strengthened by fixing colors on the tail and head semi-arcs. We generalize this approach to virtual knotoids as follows.
    
	\begin{definition}
		Let $X$ be a finite biquandle, and let $K$ be a virtual knotoid diagram. For any two elements $x_i,x_j\in X$, an \textit{$X_{ij}$-coloring} of $K$ is a particular $X$-coloring of $K$ where the tail semi-arc of $K$ is colored by $x_i$ and the head semi-arc of $K$ is colored by $x_j$. The number of $X_{ij}$-colorings of $K$ is denoted by $\Phi_{X_{ij}}^{\mathbb{Z}}(K)$.
	\end{definition}

    The set of $X_{ij}$-colorings of a virtual knotoid diagram $K$ is in a one-to-one correspondence with the elements $f\in \text{Hom}(\mathcal{B}(K),X)$ where the generators of the tail semi-arc and the head semi-arc are mapped onto $x_i$ and $x_j$, respectively, in $X$. We denote the set of such homomorphisms by $\text{Hom}_{ij}(\mathcal{B}(K),X)$.
    
    \begin{example}
        Consider the virtual knotoid diagram $K$ in Figure \ref{32} and the biquandle $X$ of Example \ref{z332}. Clearly,
        \[\Phi_{X_{ij}}^{\mathbb{Z}}(K)=\begin{cases}
            1,& j=i+1 \;(\text{mod } 3),\\
            0,&\text{otherwise}.
        \end{cases}\]
        \label{xij}
    \end{example}

    \begin{proposition}
		The number of biquandle $X_{ij}$-colorings, $\Phi_{X_{ij}}^{\mathbb{Z}}$, is an invariant of virtual knotoids.
        \label{color}
	\end{proposition}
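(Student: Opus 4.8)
The plan is to upgrade the coloring bijection already established in the proof that $\Phi_X^{\mathbb{Z}}$ is an invariant, by additionally keeping track of the colors on the tail and head semi-arcs. Recall that for each generalized Reidemeister move turning a diagram $K$ into a diagram $K'$, the biquandle axioms furnish a bijection $\Psi$ between the $X$-colorings of $K$ and those of $K'$. This bijection is local: it alters only the colors of the semi-arcs lying inside the disk where the move is performed, while every semi-arc meeting the boundary of that disk retains its color (recall also that colors are unchanged through virtual crossings, so the virtual and mixed moves are handled the same way). First I would record the fact, already stated for knotoid diagrams, that every generalized Reidemeister move is performed \emph{away from the endpoints}; in particular the tail and head endpoints lie outside the disk of the move.

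Next I would verify that $\Psi$ preserves the color of the semi-arc incident to the tail and the color of the semi-arc incident to the head. There are two situations. If the tail (resp. head) semi-arc does not enter the disk of the move, its color is literally unchanged by $\Psi$. If it does enter the disk, then since the endpoint itself lies outside the disk, the portion of this semi-arc adjacent to the endpoint is a boundary strand of the disk, and $\Psi$ fixes the color of every boundary strand; hence the color present at the endpoint is unchanged. The only delicate case is a Reidemeister I move applied to the tail or head semi-arc: here the semi-arc acquires a kink and is subdivided by a new crossing. However, axiom (1), $x\un x=x\ovr x$, is exactly what guarantees that a strand colored $x$ remains colored $x$ on both sides of the kink, so the portion adjacent to the endpoint still carries the original color $x_i$ (resp. $x_j$).

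It follows that $\Psi$ restricts to a bijection between the $X$-colorings of $K$ with tail color $x_i$ and head color $x_j$ and the $X$-colorings of $K'$ with the same two boundary colors; equivalently, it restricts to a bijection $\mathrm{Hom}_{ij}(\mathcal{B}(K),X)\rightarrow\mathrm{Hom}_{ij}(\mathcal{B}(K'),X)$ for every pair $i,j$. Therefore $\Phi_{X_{ij}}^{\mathbb{Z}}(K)=\Phi_{X_{ij}}^{\mathbb{Z}}(K')$, and since any two diagrams of the same virtual knotoid are connected by a finite sequence of such moves, $\Phi_{X_{ij}}^{\mathbb{Z}}$ is an invariant of virtual knotoids.

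I expect the main obstacle to be the bookkeeping in the Reidemeister I case: one must check that, regardless of the sign of the kink and the orientation of the strand, the color carried by the endpoint side of the subdivided semi-arc coincides with the color of the original semi-arc, which is precisely what axiom (1) supplies. All the remaining moves leave the endpoint-adjacent colors manifestly untouched, so the argument reduces to combining the existing bijection $\Psi$ with the locality of these moves relative to the two endpoints.
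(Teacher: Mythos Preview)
Your argument is correct. In fact, the paper states this proposition without proof; the only relevant argument the authors give is the one-sentence proof of the earlier proposition that $\Phi_X^{\mathbb{Z}}$ is invariant, where they observe that the biquandle axioms guarantee a unique corresponding $X$-coloring after each generalized Reidemeister move. Your proof is precisely the natural refinement of that observation: you make explicit that the bijection $\Psi$ is local to the disk of the move, that the endpoints lie outside this disk, and hence that the tail and head colors are preserved. The extra care you take with the Reidemeister~I case (using axiom~(1) to see that a kink on the tail or head semi-arc does not change the color at the endpoint) is not something the paper spells out, but it is exactly the right point to check. So your approach is both correct and more detailed than what the paper provides.
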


    It is advantageous to store the $X_{ij}$-coloring number data of a virtual knotoid diagram $K$ in a matrix in the following way.
    
	\begin{definition}
		Let $X$ be a finite biquandle and let $K$ be a virtual knotoid diagram. The $\textit{biquandle counting matrix}\,$ of $K$, denoted by $\mathcal{M}_X(K)$, is a matrix whose $(i,j)$-th entry is the number of $X_{ij}$-colorings of $K$.
	\end{definition}
    
    \begin{example}
        Consider the virtual knotoid diagram $K$ and the biquandle $X$ of Example \ref{xij}. Then,
        \[\mathcal{M}_X(K)=\left[\begin{array}{c c c c c}
            0&1&0\\
            0&0&1\\
            1&0&0
        \end{array}\right].\]
    \end{example}

    \begin{theorem}
        The biquandle counting matrix $\mathcal{M}_X(K)$ is an invariant of virtual knotoids.
    \end{theorem}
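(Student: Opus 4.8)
The plan is to reduce the invariance of the whole matrix to the invariance of its entries, which is already in hand. First I would fix once and for all an ordering $X=\{x_1,\dots,x_n\}$ of the finite biquandle, so that $\mathcal{M}_X(K)$ is the $n\times n$ matrix whose $(i,j)$-th entry is, by definition, $\Phi_{X_{ij}}^{\mathbb{Z}}(K)$. With this normalization, proving $\mathcal{M}_X(K)=\mathcal{M}_X(K')$ for two diagrams related by a single generalized Reidemeister move is the same as proving $\Phi_{X_{ij}}^{\mathbb{Z}}(K)=\Phi_{X_{ij}}^{\mathbb{Z}}(K')$ for every pair $(i,j)$, which is precisely Proposition \ref{color}.

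To keep the argument self-contained, I would re-examine the bijection behind the counting invariant $\Phi_X^{\mathbb{Z}}$. A generalized Reidemeister move is supported in a small disk $D$ disjoint from the tail and head, and $K$, $K'$ agree outside $D$; the invariance proof for $\Phi_X^{\mathbb{Z}}$ attaches to each $X$-coloring $f$ of $K$ a unique $X$-coloring $f'$ of $K'$ agreeing with $f$ on every semi-arc that meets the complement of $D$. Since the tail and head semi-arcs lie entirely outside $D$, the colorings $f$ and $f'$ assign them the same elements of $X$. Consequently this bijection sends $X_{ij}$-colorings to $X_{ij}$-colorings in both directions, hence restricts to a bijection $\mathrm{Hom}_{ij}(\mathcal{B}(K),X)\leftrightarrow\mathrm{Hom}_{ij}(\mathcal{B}(K'),X)$ for each $(i,j)$. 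Chaining over a finite sequence of such moves, and noting that isotopy of $S^2$ leaves the coloring data untouched, shows that each entry depends only on the underlying virtual knotoid.

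The step that I expect to demand the most care is the endpoint bookkeeping: one must confirm that the stipulation from Section \ref{knotoid} that Reidemeister moves are performed away from the endpoints really does force the color-preserving bijection to fix the tail and head colors, so that the restriction to $\mathrm{Hom}_{ij}$ is well defined. I would also flag that $\mathcal{M}_X(K)$ is canonical only relative to the chosen ordering of $X$: reordering by a permutation $\sigma$ replaces $\mathcal{M}_X(K)$ with its conjugate by the associated permutation matrix, i.e.\ a simultaneous permutation of rows and columns.
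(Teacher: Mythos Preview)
Your proposal is correct and matches the paper's (implicit) reasoning: the paper offers no explicit proof of this theorem, treating it as an immediate consequence of Proposition~\ref{color}, since the $(i,j)$-th entry of $\mathcal{M}_X(K)$ is by definition $\Phi_{X_{ij}}^{\mathbb{Z}}(K)$. Your reduction to entrywise invariance is exactly this, and your additional self-contained verification of Proposition~\ref{color} (tracking that the coloring bijection fixes the tail and head colors because the moves are supported away from the endpoints) actually supplies more detail than the paper does, as the paper states Proposition~\ref{color} without proof as well.
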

    The sum of the entries of $\mathcal{M}_X(K)$ gives the biquandle counting invariant $\Phi_X^{\mathbb{Z}}(K)$ of $K$. Thus, the biquandle counting matrix is an enhancement of the biquandle counting invariant for virtual knotoids. The following example shows that the biquandle counting matrix is strictly stronger than the biquandle counting invariant.
    \begin{example}
    \begin{figure}[h!]
        \begin{subfigure}{0.5\textwidth}
        \centering
            \includegraphics[width=28 mm]{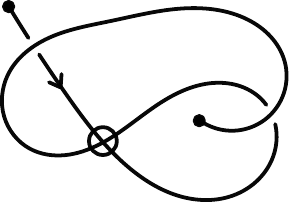}
            \caption{The virtual knotoid diagram 2.1.2.}
            \label{22}
        \end{subfigure}
        \begin{subfigure}{0.5\textwidth}
        \centering
            \includegraphics[width=32 mm]{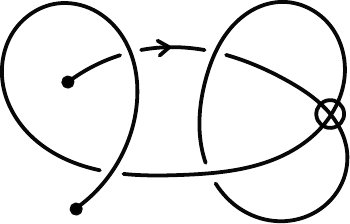}
            \caption{The virtual knotoid diagram 4.1.2.}
            \label{42}
        \end{subfigure}
        \caption{}
        \label{2242}
    \end{figure}
    Let $K_1$ and $K_2$ be the virtual knotoid diagrams given in Figure \ref{22} and \ref{42}, respectively. Consider the biquandle $X=Z_3$ with the operation matrix given in Example \ref{z332}. We compute the biquandle counting matrices of $K_1$ and $K_2$ using our Python code and observe that,

    \[\mathcal{M}_X(K_1)=\left[\begin{array}{c c c c c}
            1&0&0\\
            0&1&0\\
            0&0&1
        \end{array}\right]\neq \left[\begin{array}{c c c c c}
            0&1&0\\
            0&0&1\\
            1&0&0
        \end{array}\right]=\mathcal{M}_X(K_2).\]
    The biquandle counting matrix distinguishes these two virtual knotoid diagrams where the biquandle counting invariant does not, as $\Phi_X^{\mathbb{Z}}(K_1)=3=\Phi_X^{\mathbb{Z}}(K_2)$.
    \end{example}

Note that we have worked on biquandle colorings of prime virtual knotoids in our examples so far. Given any two knotoid diagrams $K_1$, $K_2$, and two $X$-colorings of $K_1$ and $K_2$, respectively. It is clear that the $X$-colorings of $K_1$ and $K_2$ can be extended to a unique $X$-coloring of the product $K_1K_2$ if the head semi-arc of $K_1$ has the same color as the tail semi-arc of $K_2$. The following corollaries follow easily from this observation.

\begin{corollary}
  Let $X=\{x_1,...,x_n\}$ be a biquandle, and $K_1$ and $K_2$ be two virtual knotoid diagrams. We have, 
    \[ \Phi_{X_{ik}}^{\mathbb{Z}}(K_1K_2)=\Phi_{X_{ij}}^{\mathbb{Z}}(K_1) \Phi_{X_{jk}}^{\mathbb{Z}}(K_2),\]
    for all $i,j,k\in \{1,...,n\}.$
\end{corollary}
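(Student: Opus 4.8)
The plan is to prove the identity by a restriction--and--extension bijection that keeps track of the color assigned to the arc produced by the gluing. First I would record the combinatorial structure of the product $K_1K_2$. By the definition of the product, $S^2-\text{Int}(U)$ and $S^2-\text{Int}(V)$ are glued along $\phi:\partial U\to\partial V$ with $K_1\cap\partial U$ sent to $K_2\cap\partial V$, and this gluing happens in disk neighborhoods meeting the diagrams along radii; hence no classical or virtual crossing is created or destroyed. Consequently the crossings of $K_1K_2$ are the disjoint union of the crossings of $K_1$ and the crossings of $K_2$, and the semi-arcs of $K_1K_2$ are the disjoint union of the semi-arcs of $K_1$ and $K_2$ with one exception: the head semi-arc of $K_1$ and the tail semi-arc of $K_2$ are identified into a single semi-arc, which I will call the \emph{bridging arc}.

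Next I would analyze $X$-colorings in terms of this structure. Since the crossing relations imposed on an $X$-coloring of $K_1K_2$ are exactly the union of the crossing relations of $K_1$ with those of $K_2$ (no crossing mixes the two glued pieces), an $X$-coloring $f$ of $K_1K_2$ is precisely a pair $(f_1,f_2)$ of an $X$-coloring $f_1$ of $K_1$ and an $X$-coloring $f_2$ of $K_2$ that agree on the bridging arc. Writing $x_j$ for the color that $f$ assigns to the bridging arc, the restriction $f_1$ is an $X$-coloring of $K_1$ with tail $x_i$ and head $x_j$, and $f_2$ is an $X$-coloring of $K_2$ with tail $x_j$ and head $x_k$; conversely any compatible pair glues back to a unique $X$-coloring of $K_1K_2$ with tail $x_i$, head $x_k$, and bridging color $x_j$. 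This is exactly the unique-extension statement recalled immediately before the corollary, now phrased so that the common color at the identified endpoints is named $x_j$.

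The count then follows by fixing the bridging color. For a fixed $x_j$, the map $f\mapsto(f_1,f_2)$ is a bijection from the set of $X$-colorings of $K_1K_2$ with tail $x_i$, head $x_k$, whose bridging arc carries the color $x_j$ (this is the quantity the left-hand side records for that value of $j$), onto the product set $\text{Hom}_{ij}(\mathcal{B}(K_1),X)\times\text{Hom}_{jk}(\mathcal{B}(K_2),X)$. Taking cardinalities and using the one-to-one correspondence between $X_{ij}$-colorings and $\text{Hom}_{ij}(\mathcal{B}(K),X)$ yields $\Phi_{X_{ij}}^{\mathbb{Z}}(K_1)\,\Phi_{X_{jk}}^{\mathbb{Z}}(K_2)$, which is the right-hand side, establishing the displayed identity for every $i,j,k$. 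Well-definedness on equivalence classes is inherited from Proposition \ref{color}, since each factor count is invariant under generalized Reidemeister moves performed away from the endpoints of $K_1$ and $K_2$.

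The step I expect to be the main obstacle is the justification that the gluing introduces no coloring constraint beyond the identification of the bridging arc, i.e. that the relation ideal of $\mathcal{B}(K_1K_2)$ is generated by those of $\mathcal{B}(K_1)$ and $\mathcal{B}(K_2)$ together with the single relation equating the two generators that meet along $\partial U\cong\partial V$. This requires checking that every crossing of $K_1K_2$ lies entirely in one of the two glued pieces and that the radius-crossing condition in the definition of the product forces the head generator of $K_1$ and the tail generator of $K_2$ to be literally the same semi-arc of $K_1K_2$. Once this is in place, the restriction map is visibly a bijection onto compatible pairs and the remaining counting is routine.
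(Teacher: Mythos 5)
Your overall route is the same as the paper's: the paper derives this corollary in a single line from precisely the observation you elaborate, namely that an $X$-coloring of $K_1$ and an $X$-coloring of $K_2$ extend to a unique $X$-coloring of $K_1K_2$ exactly when the color of the head semi-arc of $K_1$ agrees with that of the tail semi-arc of $K_2$. Your decomposition of the crossings and semi-arcs of the product, and the resulting restriction/extension bijection, are a careful write-up of that observation; the ``main obstacle'' you flag at the end is indeed immediate, since the gluing disks $U$ and $V$ meet the diagrams only along radii and contain no crossings, so every crossing relation of $\mathcal{B}(K_1K_2)$ lives entirely in one of the two pieces.

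There is, however, one genuine gap, located in your parenthetical claim that the set of $X_{ik}$-colorings of $K_1K_2$ whose bridging arc carries the fixed color $x_j$ ``is the quantity the left-hand side records for that value of $j$.'' It is not: $\Phi_{X_{ik}}^{\mathbb{Z}}(K_1K_2)$ counts \emph{all} $X_{ik}$-colorings of the product, and these are partitioned (not exhausted) by the bridging color. What your bijection actually establishes is that the $j$-th block of this partition has cardinality $\Phi_{X_{ij}}^{\mathbb{Z}}(K_1)\,\Phi_{X_{jk}}^{\mathbb{Z}}(K_2)$, and hence $\Phi_{X_{ik}}^{\mathbb{Z}}(K_1K_2)=\sum_{j=1}^{n}\Phi_{X_{ij}}^{\mathbb{Z}}(K_1)\,\Phi_{X_{jk}}^{\mathbb{Z}}(K_2)$. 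The per-$j$ identity as displayed cannot hold for every $j$: take $K_2$ to be the trivial knotoid, for which $\Phi_{X_{jk}}^{\mathbb{Z}}(K_2)$ is $1$ if $j=k$ and $0$ otherwise; the displayed identity with any $j\neq k$ would then force $\Phi_{X_{ik}}^{\mathbb{Z}}(K_1)=0$ for all $K_1$, which is absurd. (This imprecision is arguably present in the statement as printed, but your proof should not reproduce it.) Note that the summed version is exactly what the subsequent corollary uses, being the $(i,k)$ entry of $\mathcal{M}_X(K_1K_2)=\mathcal{M}_X(K_1)\mathcal{M}_X(K_2)$. Your argument becomes complete and correct once you replace the misidentification by the disjoint-union decomposition over the bridging color and sum over $j$.
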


    \begin{corollary}
        
     (Proven in \cite{gugumcu2021biquandle} for classical knotoids.)
        Let $K_1$ and $K_2$ be two virtual knotoid diagrams. Then for any finite biquandle $X$, the biquandle counting matrix of $K_1K_2$ is the matrix product of the biquandle counting matrices of $K_1$ and $K_2$, i.e.
        \[\mathcal{M}_X(K_1K_2)=\mathcal{M}_X(K_1)\mathcal{M}_X(K_2).\]
    \end{corollary}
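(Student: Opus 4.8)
The plan is to reduce the matrix identity to an entrywise statement and then invoke the gluing observation recorded just before the corollaries. First I would unwind the definitions. By definition, the $(i,k)$-entry of $\mathcal{M}_X(K_1K_2)$ is $\Phi_{X_{ik}}^{\mathbb{Z}}(K_1K_2)$, the number of $X$-colorings of the product diagram whose tail semi-arc is colored $x_i$ and whose head semi-arc is colored $x_k$. Since $K_1K_2$ has tail $t_1$ and head $h_2$, its tail semi-arc is precisely the tail semi-arc of $K_1$ and its head semi-arc is precisely the head semi-arc of $K_2$. On the other side, the $(i,k)$-entry of the matrix product $\mathcal{M}_X(K_1)\mathcal{M}_X(K_2)$ is $\sum_{j=1}^n \Phi_{X_{ij}}^{\mathbb{Z}}(K_1)\Phi_{X_{jk}}^{\mathbb{Z}}(K_2)$, so it suffices to establish this summed identity for every pair $i,k$.

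The heart of the argument is a bijection. The product diagram $K_1K_2$ is obtained by identifying the head semi-arc of $K_1$ with the tail semi-arc of $K_2$ into a single semi-arc, and no new crossing is created by the gluing. Hence any $X$-coloring of $K_1K_2$ restricts to an $X$-coloring of the sub-diagram $K_1$ and an $X$-coloring of the sub-diagram $K_2$, and the shared semi-arc forces the head color of $K_1$ to agree with the tail color of $K_2$. Conversely, by the observation recorded immediately before the corollaries, any $X$-coloring of $K_1$ and any $X$-coloring of $K_2$ whose glued semi-arcs receive the same color extend to a \emph{unique} $X$-coloring of $K_1K_2$. This produces a bijection between $X_{ik}$-colorings of $K_1K_2$ and pairs $(f_1,f_2)$ in which $f_1$ is an $X_{ij}$-coloring of $K_1$ and $f_2$ is an $X_{jk}$-coloring of $K_2$ for some common intermediate color $x_j$.

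I would then partition the set of $X_{ik}$-colorings of $K_1K_2$ according to the color $x_j$ appearing on the shared semi-arc. For each fixed $j$, the preceding corollary gives that the number of such colorings equals $\Phi_{X_{ij}}^{\mathbb{Z}}(K_1)\Phi_{X_{jk}}^{\mathbb{Z}}(K_2)$; summing over the $n$ possible intermediate colors yields $\Phi_{X_{ik}}^{\mathbb{Z}}(K_1K_2)=\sum_{j=1}^n \Phi_{X_{ij}}^{\mathbb{Z}}(K_1)\Phi_{X_{jk}}^{\mathbb{Z}}(K_2)$, which is exactly the $(i,k)$-entry of $\mathcal{M}_X(K_1)\mathcal{M}_X(K_2)$. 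Since this holds for all $i,k$, the matrices coincide.

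The only point requiring genuine care, and the step I expect to be the main obstacle, is verifying that the gluing imposes no coloring constraints beyond the single matching condition on the shared semi-arc. Because the product is formed along disk neighborhoods that meet the diagrams only in radii, the identification merges two semi-arcs into one without introducing a crossing, so the crossing relations of $K_1K_2$ are precisely the disjoint union of those of $K_1$ and $K_2$ together with the equality of the colors on the glued semi-arc. Once this is confirmed, the well-definedness and injectivity of the restriction map, together with surjectivity via the unique extension, are routine, and the matrix identity follows.
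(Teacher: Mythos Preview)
Your proposal is correct and matches the paper's approach: the paper does not give a standalone proof but states that both corollaries ``follow easily'' from the gluing observation that $X$-colorings of $K_1$ and $K_2$ with matching colors on the glued semi-arc extend uniquely to colorings of $K_1K_2$, which is exactly the bijection you set up and then sum over the intermediate color $x_j$. Your added remark that the product construction introduces no new crossing---so the crossing relations split cleanly---is precisely the point underlying the paper's ``it is clear'' sentence.
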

	
    \section{Biquandle Virtual Brackets and Virtual Knotoids}
    \label{bvbs}moo

    Biquandle virtual brackets were introduced by Nelson in \cite{nelson2019biquandle} as a generalization of biquandle brackets in \cite{nelson2017quantum} to oriented virtual knots and links.

    A biquandle virtual bracket is defined as follows. 

\begin{definition}\label{defn:biquandlebracket}
		Let $X$ be a biquandle and $R$ be a commutative, unitary ring. A \textit{biquandle} \textit{X-virtual bracket}, denoted by $\beta$, consists of maps $A,B,V,C,D,U:X\text{ x }X\rightarrow R$ and two elements $\delta\in R$, $\omega\in R^{\text{x}}$ such that for all $x,y,z\in X$ the following equations $(1)-(23)$ hold, where for all $x,y\in X$ and $F\in\{A,B,V,C,D,U\}$,  $F(x,y)$ is denoted by $F_{x,y}$, and for convenience, $x\un y$ and $x\ovr y$ are denoted by $x^y$ and $x_y$, respectively. We name the maps $A,B,V,C,D,U$ as \textit{coefficient maps}.
		\begin{align}
			\omega&=&&\hspace{-4.2cm}\delta A_{x,x}+B_{x,x}+V_{x,x},\\
			\omega^{-1}&=&&\hspace{-4.2cm}\delta C_{x,x}+D_{x,x}+U_{x,x},\\
			1&=&& \hspace{-4.2cm}A_{x,y}C_{x,y}+V_{x,y}U_{x,y},\\
			1&=&& \hspace{-4.2cm}B_{x,y}D_{x,y}+V_{x,y}U_{x,y},\\
			0&=&& \hspace{-4.2cm}A_{x,y}U_{x,y}+V_{x,y}C_{x,y},\\
			0&=&& \hspace{-4.2cm}B_{x,y}U_{x,y}+V_{x,y}D_{x,y},\\
			0&=&&\hspace{-4.2cm}\delta B_{x,y}D_{x,y}+A_{x,y}D_{x,y}+B_{x,y}C_{x,y},\\
			0&=&&\hspace{-4.2cm}\delta A_{x,y}C_{x,y}+A_{x,y}D_{x,y}+B_{x,y}C_{x,y},
        \end{align}
        \begin{align}
			A_{x,y} A_{x^y,z_y} A_{y,z}+V_{x,y} A_{x^y,z_y} V_{y,z}&=&&\hspace{-1.4cm}A_{y_x,z_x} A_{x,z} A_{x^z,y^z}+V_{y_x,z_x} A_{x,y} V_{x^z,y^z}\\
			A_{x,y} A_{x^y,z_y} B_{y,z}+B_{x,y} A_{x^y,z_y} A_{y,z}&&&\notag\\
			+\delta B_{x,y} A_{x^y,z_y} B_{y,z}+B_{x,y} A_{x^y,z_y} V_{y,z}&&&\notag\\
			+B_{x,y} B_{x^y,z_y} B_{y,z}+A_{x,y} V_{x^y,z_y} B_{y,z}&&&\notag\\
			+V_{x,y} A_{x^y,z_y} B_{y,z}&=&&\hspace{-1.4cm}A_{y _x,z_x} B_{x,z} B_{x^z,y^z}\\
			A_{x,y} B_{x^y,z_y} B_{y,z}&=&&\hspace{-1.4cm}A_{y_x,z_x} A_{x,z} B_{x^z,y^z}+B_{y_x,z_x} A_{x,z} A_{x^z,y^z}\notag\\
			&&&\hspace{-1.4cm}+\delta B_{y_x,z_x} A_{x,z} B_{x^z,y^z}+B_{y_x,z_x} A_{x,z} V_{x^z,y^z}\notag\\
			&&&\hspace{-1.4cm}+B_{y_x,z_x} B_{x,z} B_{x^z,y^z}+B_{y_x,z_x} V_{x,z} B_{x^z,y^z}\\
			A_{x,y} V_{x^y,z_y} A_{y,z}&=&&\hspace{-1.4cm}A_{y_x,z_x} A_{x,z} V_{x^z,y^z}+V_{y_x,z_x} A_{x,z} A_{x^z,y^z}\\
			A_{x,y} A_{x^y,z_y} V_{y,z}+V_{x,y} A_{x^y,z_y} A_{y,z}&=&&\hspace{-1.4cm}A_{y_x,z_x} V_{x,z} A_{x^z,y^z}\\
			B_{x,y} B_{x^y,z_y} A_{y,z}+B_{x,y} V_{x^y,z_y} V_{y,z}&=&&\hspace{-1.4cm}A_{y_x,z_x} B_{x,z} B_{x^z,y^z}+V_{y_x,z_x} V_{x,z} B_{x^z,y^z}\\
			A_{x,y} B_{x^y,z_y} B_{y,z}+V_{x,y} V_{x^y,z_y} B_{y,z}&=&&\hspace{-1.4cm}B_{y_x,z_x} B_{x,z} A_{x^z,y^z}+B_{y_x,z_x} V_{x,z} V_{x^z,y^z}\\
			B_{x,y} B_{x^y,z_y} V_{y,z}+B_{x,y} V_{x^y,z_y} A_{y,z}&=&&\hspace{-1.4cm}A_{y_x,z_x} B_{x,z} V_{x^z,y^z}\\
			A_{x,y} B_{x^y,z_y} V_{y,z}&=&&\hspace{-1.4cm}B_{y_x,z_x} B_{x,z} V_{x^z,y^z}+B_{y_x,z_x} V_{x,z} A_{x^z,y^z}\\
		V_{x,y} B_{x^y,z_y} A_{y,z}&=&&\hspace{-1.4cm} A_{y_x,z_x} V_{x,z} B_{x^z,y^z}+V_{y_x,z_x} B_{x,z} B_{x^z,y^z}\\
			A_{x,y} V_{x^y,z_y} B_{y,z}+V_{x,y} B_{x^y,z_y} B_{y,z}&=&&\hspace{-1.4cm}V_{y_x,z_x} B_{x,z} A_{x^z,y^z}\\
			V_{x,y} V_{x^y,z_y} A_{y,z}&=&&\hspace{-1.4cm}A_{y_x,z_x} V_{x,z} V_{x^z,y^z}\\
			A_{x,y} V_{x^y,z_y} V_{y,z}&=&&\hspace{-1.4cm}V_{y_x,z_x} V_{x,z} A_{x^z,y^z}\\
			V_{x,y} B_{x^y,z_y} V_{y,z}&=&&\hspace{-1.4cm}V_{y_x,z_x} B_{x,z} V_{x^z,y^z}\\
			V_{x,y} V_{x^y,z_y} V_{y,z}&=&&\hspace{-1.4cm}V_{y_x,z_x} V_{x,z} V_{x^z,y^z}
		\end{align}
	\end{definition} 
    
   A biquandle virtual bracket invariant of biquandle colored virtual knots is defined in analogy with the Kauffman bracket by Nelson in $\cite{nelson2019biquandle}$. We utilize/mimic this idea to construct biquandle virtual bracket invariant of virtual knotoids as follows.
    
    Let $X$ be a biquandle, $R$ be a commutative ring with unit element, and $K_f$ be an oriented virtual knotoid diagram with $n$ crossings that is endowed with an $X$-coloring $f$. By first ignoring the orientation on $K_f$, each classical crossing of $K_f$ is replaced by either vertical or horizontal strands as in the Kauffman bracket, or by a virtual crossing as shown in Figure \ref{smooth}. Each of these replacements is called a \textit{smoothing} of the crossing. We specifically call the virtual crossing replacement a \textit{virtual smoothing}. 
    
    \begin{figure}[h!]
       \centering
        \begin{overpic}[width=0.35\linewidth]{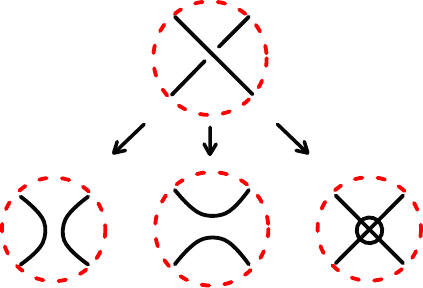}
        \put(8,-8){(a)}
        \put(46,-8){(b)}
        \put(83,-8){(c)}
        \end{overpic}
        \vspace{0.4 cm}
        \caption{Vertical, horizontal and virtual smoothings of a classical crossing.}
        \label{smooth}
    \end{figure}

    By smoothing out each crossing of $K_f$ in all possible ways, we obtain a collection of a union of a number of closed curves and a single open-ended curve that may contain virtual crossings. Each member of this collection is called a \textit{state} of $K_f$. It is clear that $K_f$ has $3^n$ states. A biquandle colored virtual knotoid diagram $K_f$ with two classical crossings $c_1$ and $c_2$, and two states (a) and (b) of $K_f$, are given in Figure \ref{states}. In (a), the smoothings of $c_1$ and $c_2$ are both horizontal. In (b), the smoothing of $c_1$ is virtual, and the smoothing of $c_2$ is horizontal. Note that the state (a) has two components, whereas (b) has a single component.

     \begin{figure}[h!]
            \centering
            \begin{overpic}[width=0.5\linewidth]{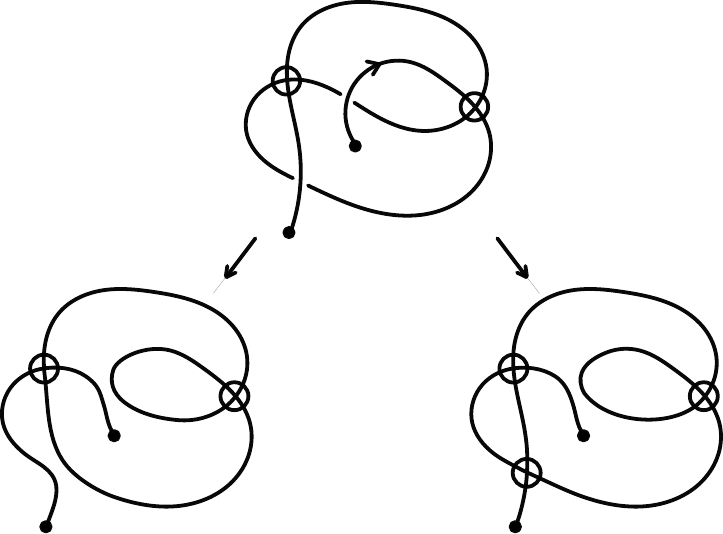}
                \put(44,52){$a_1$}
                \put(55,40){$a_2$}
                \put(29,53){$a_3$}
                \put(58,52){$a_4$}
                \put(35,44){$a_5$}
                \put(17,-2){(a)}
                \put(83,-2){(b)}
                \put(42.5,43){\color{blue}$c_1$}
                \put(45,64){\color{blue}$c_2$}
                
            \end{overpic}
            \vspace{0.5 cm}
            \caption{A labeled virtual knotoid diagram and its states.}
            \label{states}
        \end{figure}

    In order to obtain a state-sum invariant over states of $K_f$, each smoothing of a crossing utilized to obtain a state $S$ is associated with one of the mappings $A,B,V,C,D,U$ with respect to the orientation on $K_f$: If a crossing is a positive crossing then the vertical smoothing of the crossing is associated with $A(x,y) = A_{x,y}$, the horizontal smoothing of the crossing is associated with $B(x, y)= B_{x,y}$, and the virtual smoothing of the crossing is associated with $V(x,y)=V_{x,y}$, where $x,y$ denote the $X$- (input) colors of the crossing. If the crossing is a negative crossing, the mappings $C, D, U$ are utilized for vertical, horizontal and virtual smoothings, respectively. The rules are also depicted in Figure \ref{bvbskein}.

The verification that the $X$-virtual bracket defined for $K_f$ in the following definition remains unchanged under generalized Reidemeister moves is similar to the verification of the $X$-virtual bracket being an invariant of $X$-colored virtual knots and links. The readers are referred to \cite{nelson2019biquandle} for the details.   

\begin{definition}
Let $X$ be a biquandle, and $K_f$ be an $X$-colored virtual knotoid diagram.
The \textit{X-virtual bracket} invariant of an $X$-colored virtual knotoid diagram is defined as,

    $$\beta(K_f)=\sum_{S}\beta(S),$$
    where $\beta(S)$ is the product of the values of mappings associated with smoothings of crossings to obtain the state $S$ with $\delta^{m} \omega ^{-wr(K)}$ where $\delta \in R$, $\omega \in R^{x}$ are that appear in Conditions 1-2 in Definition \ref{defn:biquandlebracket}, $m$ is the number of components (up to virtual isotopy) in $S$, $wr(K)$ is the writhe of $K$.

    \end{definition}

    The  $X$-virtual bracket evaluation can be also described locally as in Figure \ref{bvbskein}, since the collection of states of a virtual knotoid diagram can be partitioned into states obtained by utilizing the vertical, horizontal, and the virtual smoothing at a chosen crossing. These local pictures can be considered as \textit{skein relations} of the state-sum.  

 \begin{figure}[h!]
    \centering
    \vspace{0.3 cm}
        \begin{overpic}[width= 0.7\linewidth]{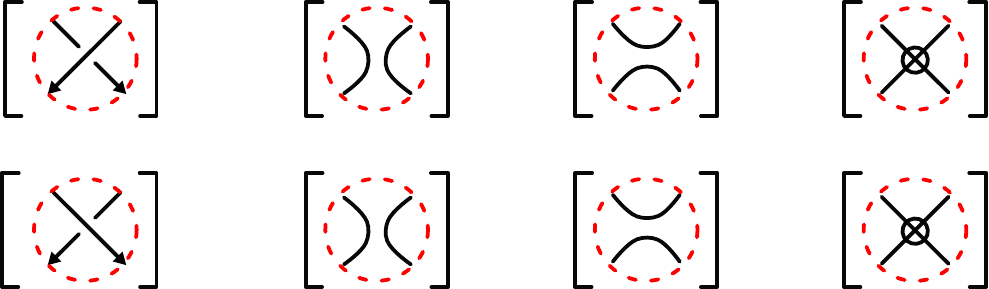}
       \put(2,26.5){$x$}
        \put(2.2,18.5){$y$}

        \put(2,9){$y$}
        \put(2,1){$x$}
        
        \put(18.5,23){= $A_{x,y}$}
        \put(47,23){$+B_{x,y}$}
        \put(74.7,23){$+V_{x,y}$}

        \put(18.5,5.5){= $C_{x,y}$}
        \put(47,5.5){$+D_{x,y}$}
        \put(74.7,5.5){$+U_{x,y}$}
    \end{overpic}
    \caption{Skein relations for $X$-virtual bracket.}
		\label{bvbskein}
    \end{figure}
    \begin{definition}

    We compute the $X$-virtual bracket value of all possible $X$-colorings of $K$ and form a multi-set of these values, which is denoted by $\Phi_X^{\beta, M}(K)$. More formally, 
    \[\Phi_X^{\beta, M}(K)=\{\beta(K_f)\;|\; f\in \text{Hom}(\mathcal{B}(K),X)\}.\]
    It is clear that this multi-set is an invariant of classical and virtual knotoids. We call this multi-set \textit{X- virtual bracket multi-set}. The cardinality of this multi-set is the biquandle counting invariant of $K$.

    \end{definition}

    \begin{definition}
        Let $K$ be a virtual knotoid diagram with the trivial fundamental biquandle coloring, denoted by $K_{1_{\mathcal{B}}}$ (see Example \ref{trivial}). The sum of the biquandle virtual bracket value contributions of the states of $K$, $\beta(K_{1_{\mathcal{B}}})$, is \textit{the fundamental biquandle virtual bracket of} $K$.
    \end{definition}

    Let $X$ be a finite biquandle and $\beta$ be an $X$-virtual bracket. Consider an $X$-coloring $f\in\text{Hom}(\mathcal{B}(K),X)$. The biquandle virtual bracket value $\beta(K_f)$ can be evaluated by replacing each coefficient $F_{a_i,a_j}$ in $\beta(K_{1_{\mathcal{B}}})$ with $F_{f(a_i),f(a_j)}$ of $\beta$ for all $F\in \{A,B,V,C,D,U\}$, and plugging the $\delta$ and $\omega$ values of $\beta^v$ in $\beta(K_{1_{\mathcal{B}}})$.
    
    \begin{example}
    \label{mbvb}
  In Figure \ref{states} we depict a virtual knotoid diagram $K$ that represents the virtual knotoid 2.1.1, 
  and the states of $K_{1_\mathcal{B}}$. For each state $S$  of $K_{1_\mathcal{B}}$, the biquandle virtual bracket $\beta(S) $ is given below $S$. Then the fundamental biquandle virtual bracket of $K$ is,
    \begin{align}
    \label{fbvb}
        &\beta(K_{1_\mathcal{B}})=\omega^{2}\,[\,\delta^2(D_{a_4,a_1}D_{a_3,a_4}+C_{a_4,a_1}U_{a_3,a_4}+U_{a_4,a_1}C_{a_3,a_4})\notag\\&+\delta(D_{a_4,a_1}C_{a_3,a_4}+D_{a_4,a_1}U_{a_3,a_4}+C_{a_4,a_1}D_{a_3,a_4}+C_{a_4,a_1}C_{a_3,a_4}+U_{a_4,a_1}D_{a_3,a_4}+U_{a_4,a_1}U_{a_3,a_4})].
    \end{align}

    \begin{figure}[h!]
            \centering
            \begin{overpic}[width=0.8\linewidth]{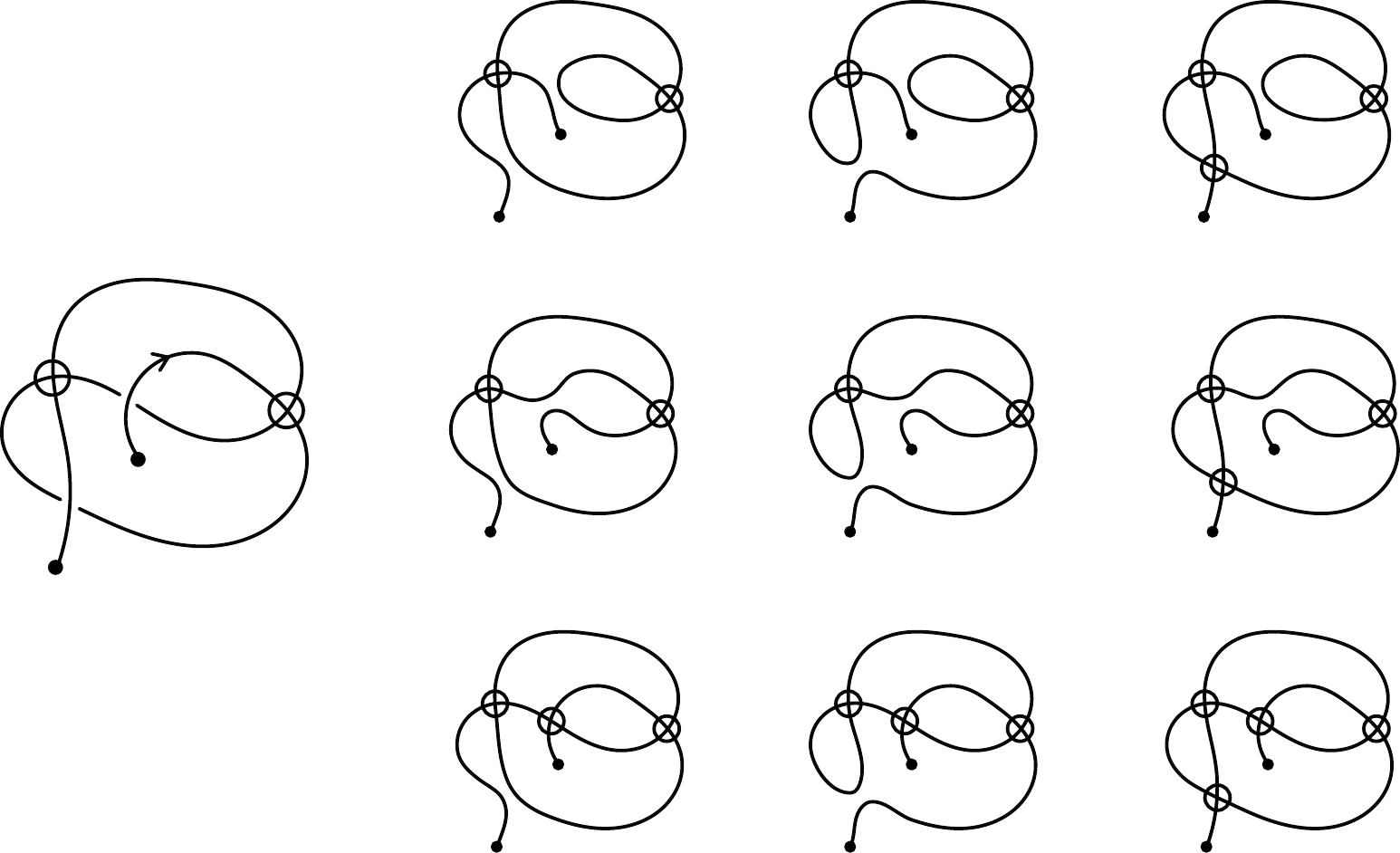}
                \put(6.5,27.5){$a_1$}
                \put(17,20){$a_2$}
                \put(-3,30){$a_3$}
                \put(16,27.5){$a_4$}
                \put(1,22.5){$a_5$}

                \put(30,41.5){$\omega^2\delta^2 D_{a_4,a_1}D_{a_3,a_4}$}
                \put(56,41.5){$\omega^2\delta D_{a_4,a_1}C_{a_3,a_4}$}
                \put(81,41.5){$\omega^2\delta D_{a_4,a_1}U_{a_3,a_4}$}

                \put(30,19){$\omega^2\delta C_{a_4,a_1}D_{a_3,a_4}$}
                \put(56,19){$\omega^2\delta C_{a_4,a_1}C_{a_3,a_4}$}
                \put(81,19){$\omega^2\delta^2 C_{a_4,a_1}U_{a_3,a_4}$}

                \put(30,-3.5){$\omega^2\delta U_{a_4,a_1}D_{a_3,a_4}$}
                \put(56,-3.5){$\omega^2\delta^2U_{a_4,a_1}C_{a_3,a_4}$}
                \put(81,-3.5){$\omega^2\delta U_{a_4,a_1}U_{a_3,a_4}$}
            \end{overpic}
            \vspace{0.5 cm}
            \caption{A labeled virtual knotoid diagram and its state contributions.}
            \label{states}
        \end{figure}
		Now, consider the Alexander biquandle over $X=\mathbb{Z}_3$ with the operations
        \[ x\un y = 2x+1 = x\ovr y,\]
	
	and let $R=\mathbb{Z}_5$. Our Python computations show that the following matrix of coefficient maps gives a biquandle virtual bracket,
	\[\left[\begin{array}{c c c | c c c | c c c | c c c | c c c | c c c}
		4 & 0 & 0 \;&\;1 & 0 & 0 \;&\;0 & 2 & 2 \;&\;4 & 0 & 0 \;&\;1 & 0 & 0 \;&\;0 & 3 & 3 \\
		0 & 4 & 0 \;&\;0 & 1 & 0 \;&\;4 & 0 & 1 \;&\;0 & 4 & 0 \;&\;0 & 1 & 0 \;&\;4 & 0 & 1 \\
		0 & 0 & 4 \;&\;0 & 0 & 1 \;&\;2 & 3 & 0 \;&\;0 & 0 & 4 \;&\;0 & 0 & 1 \;&\;3 & 2 & 0
	\end{array}\right],\]
	with $\delta=2$ and $\omega = 4$. 
    
    The biquandle homomorphism $f\in\text{Hom}(\mathcal{B}(K),X)$ defined as \[f(a_i)=\begin{cases}
        3, \quad\text{if $\,i\,$ is odd,}\\
        1, \quad\text{if $\,i\,$ is even,}
    \end{cases}\] for $i\in\{1,2,3,4,5\}$ is an $X$-coloring of $K$. In this case, the equation \eqref{fbvb} becomes 
    \begin{align}
    \label{fbvbc}
        &\beta(K_f)=\omega^{2}\,[\,\delta^2(D_{1,3}D_{3,1}+C_{1,3}U_{3,1}+U_{1,3}C_{3,1})\notag\\&+\delta(D_{1,3}C_{3,1}+D_{1,3}U_{3,1}+C_{1,3}D_{3,1}+C_{1,3}C_{3,1}+U_{1,3}D_{3,1}+U_{1,3}U_{3,1})].
    \end{align}
    Evaluating the Equation \eqref{fbvbc} for $\delta=2$, $\omega=4$, and taking coefficient matrices of the $X$-virtual bracket $\beta$ into account, we obtain
    $\beta(K_f)=3$. We repeat the same process for all $X$-colorings of $K$ and obtain the multi-set of $X$-virtual bracket values of $K$,
    \[\Phi_X^{\beta, M}(K)=\{3,3,2\}.\]
    We use our Python code to compute the multi-set of biquandle virtual bracket values of the virtual knotoid 4.1.1 and obtain $\Phi_X^{\beta,M}(4.1.1)=\{2,2,2\}$. Thus, the virtual knotoids 2.1.1 and 4.1.1 are not distinguished by the $X$-counting invariant, as it is equal to $3$ for both virtual knotoids, but by the multi-set of $X$-virtual bracket values. This shows that the multi-set of biquandle virtual bracket values is a proper enhancement of the biquandle counting invariant.
    \end{example}

    \begin{definition}
        Let $X$ be a finite biquandle, $\beta$ be an $X$-virtual bracket and $R$ be a number ring. The \textit{biquandle virtual bracket polynomial} of a virtual knotoid diagram $K$ is defined as
        \[\Phi_X^{\beta}(K)=\sum_{f\in \text{Hom}(\mathcal{B}(K),X)}u^{\beta(K_f)},\]
        where $u$ is a formal variable.
    \end{definition}
    
    \begin{example}
        It follows from Example \ref{mbvb} that, 
        \[\Phi_X^{\beta}(2.1.1)=2u^3+u^2,\qquad \Phi_X^{\beta}(4.1.1)=3u^2.\]
    \end{example}
    The biquandle counting matrix is an enhancement of the biquandle counting invariant for classical and virtual knotoids, as is pointed out in Section \ref{coloringinv}. In what follows, we enhance the biquandle virtual bracket value multi-set in a similar fashion. For a finite biquandle $X=\{x_1,x_2,...,x_n\}$ and for each $1\leq i,j\leq n$, we compute the biquandle virtual bracket values of $X_{ij}$-colored diagrams and obtain the multi-set of these values,
    
    \[\Phi_{X_{ij}}^{\beta, M}(K)=\{\beta(K_f)\;|\; f\in \text{Hom}_{ij}(\mathcal{B}(K),X)\}.\]
    
    \begin{theorem}
        The family of multi-sets $\{\Phi_{X_{ij}}^{\beta, M}(K)\}_{i,j\in X}$ is an invariant of virtual knotoids.
    \end{theorem}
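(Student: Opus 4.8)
The plan is to reduce the statement to two facts that are already available in the excerpt: first, that the biquandle virtual bracket value $\beta(K_f)$ of a single $X$-colored diagram is unchanged under generalized Reidemeister moves (this was asserted in analogy with Nelson's computation for $X$-colored virtual knots and links), and second, that each such move sets up a bijection between the colorings of a diagram and the colorings of its image. Granting these, invariance of the whole family will follow by a bookkeeping argument indexed by the pair $(i,j)$, with the one new ingredient being that the bijection respects the prescribed tail and head colors.

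First I would recall the correspondence, established earlier, between $X$-colorings of a virtual knotoid diagram $K$ and elements of $\text{Hom}(\mathcal{B}(K),X)$, together with the observation used in the proof that $\Phi_X^{\mathbb{Z}}$ is an invariant: every generalized Reidemeister move induces a bijection $\text{Hom}(\mathcal{B}(K),X)\to\text{Hom}(\mathcal{B}(K'),X)$, where $K'$ is the diagram obtained after the move, and isotopy of $S^2$ affects neither the colorings nor the bracket. The crucial additional point is that all these moves are performed away from the tail and head, so the tail and head semi-arcs persist through each move and keep their labels. Consequently the bijection restricts, for every fixed pair $x_i,x_j\in X$, to a bijection $\text{Hom}_{ij}(\mathcal{B}(K),X)\to\text{Hom}_{ij}(\mathcal{B}(K'),X)$.

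Next I would invoke invariance of $\beta(K_f)$ under each generalized Reidemeister move: if $f\in\text{Hom}(\mathcal{B}(K),X)$ corresponds to $f'\in\text{Hom}(\mathcal{B}(K'),X)$ under the bijection above, then $\beta(K_f)=\beta(K'_{f'})$. Combining this with the previous paragraph, for each $(i,j)$ the map $f\mapsto f'$ carries $\text{Hom}_{ij}(\mathcal{B}(K),X)$ onto $\text{Hom}_{ij}(\mathcal{B}(K'),X)$ while preserving the bracket value, so the two multi-sets $\Phi_{X_{ij}}^{\beta,M}(K)$ and $\Phi_{X_{ij}}^{\beta,M}(K')$ coincide. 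Since $(i,j)$ ranges over the same fixed index set for both diagrams (the ordering of $X$ being fixed once and for all), the entire family $\{\Phi_{X_{ij}}^{\beta,M}(K)\}_{i,j\in X}$ is preserved; iterating over a sequence of moves then gives invariance under the full equivalence relation defining virtual knotoids.

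The only genuinely delicate step is the claim that the color-preserving bijection restricts to the sub-collections with prescribed tail and head colors. This is where the restriction that moves are performed away from the endpoints does the work: because no move drags a strand past the tail or head, the two distinguished semi-arcs are never destroyed, merged with a neighboring arc, nor relabeled, so their colors are literally the same before and after the move. I expect the write-up to spend most of its effort making this local tracking explicit across the several Reidemeister move types of Figure \ref{erm}, after which the equality of the multi-sets is immediate.
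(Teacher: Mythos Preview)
Your proposal is correct. The paper itself states this theorem without proof, treating it as an immediate consequence of the facts already established: that $\beta(K_f)$ is preserved under generalized Reidemeister moves, that such moves induce a bijection on $X$-colorings, and that (as used for Proposition~\ref{color}) the tail and head semi-arcs are untouched since moves occur away from the endpoints. Your write-up simply makes explicit the bookkeeping the paper leaves implicit; the only remark is that the paper does not actually spell out the local tracking across the various move types you anticipate, so your expectation on that point overestimates what the paper provides.
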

    
    In the case where $R$ is a number ring, each multi-set in this family can be represented by its biquandle virtual bracket polynomial. Then, this family of multi-sets becomes a multi-set of polynomials which can be represented by a matrix as follows.
    
	\begin{definition}
        Consider a finite biquandle $X=\{x_1,x_2,...,x_n\}$, a virtual knotoid diagram $K$, a unitary commutative number ring $R$, and an $X$-virtual bracket $\beta$ over $R$. The \textit{X-virtual bracket matrix} of $K$ with respect to $\beta$ is defined as
        \[\mathcal{M}_X^{\beta}(K)=\left[\begin{array}{c c c}
             \beta_{11}^\upsilon&...&\beta_{1n}^\upsilon  \\
             \vdots& &\vdots\\
             \beta_{n1}^\upsilon&...&\beta_{nn}^\upsilon
        \end{array}\right]\]
        where \[\beta_{ij}^\upsilon=\sum_{f\in \text{Hom}_{ij}(\mathcal{B}(K),X)}u^{\beta(K_f)}.\]
    \end{definition}
    \begin{remark}
        By definition, $\{\Phi_{X_{ij}}^{\beta, M}(K)\}_{i,j\in X}$ is an enhancement of the multi-set of $X$-virtual bracket values $\Phi_{X}^{\beta, M}(K)$. In the case where $R$ is a number ring, the biquandle virtual bracket matrix is an enhancement of the biquandle virtual bracket polynomial. Moreover, since the cardinality of $\Phi_{X_{ij}}^{\beta, M}(K)$ is the value of the $(i,j)$-th entry in the biquandle counting matrix, the biquandle virtual bracket matrix is an enhancement of the biquandle counting matrix. The enhancement relations between these invariants of virtual knotoids are illustrated in the following diagrams. Left-hand side of the table represents the general case where $R$ is an arbitrary ring, and the right-hand side represents the case where $R$ is a number ring. An arrow from an invariant to another means that the latter is an enhancement of the former.
        \begin{figure}[h!]
        \begin{minipage}{0.45\textwidth}
        \[
        \begin{tikzcd}
         \Phi_{X}^{\mathbb{Z}}(K) \arrow[r] \arrow[rd] \arrow[d] & \Phi_{X}^{\beta,M}(K) \arrow[d] \\
        \mathcal{M}_X(K) \arrow[r] & \{\Phi_{X_{ij}}^{\beta,M}(K)\}_{i,j\in X}
        \end{tikzcd}
        \]
        \end{minipage}
        \hfill
        \begin{minipage}{0.45\textwidth}
        \[
        \begin{tikzcd}
         \Phi_{X}^{\mathbb{Z}}(K) \arrow[r] \arrow[rd] \arrow[d] & \Phi_{X}^{\beta}(K) \arrow[d] \\
        \mathcal{M}_X(K) \arrow[r] & \mathcal{M}_X^{\beta}(K)
        \end{tikzcd}
        \]
        \end{minipage}
    \end{figure}
    \end{remark}
    Next, we give some examples that show that these enhancements are proper.
    \begin{example}
    \label{bvb3133}
    Consider the virtual knotoid diagrams given in Figure \ref{3133}. Let $X$ be the biquandle given in Example \ref{mbvb}.
        \begin{figure}[h!]
        \begin{subfigure}{0.5\textwidth}
        \centering
            \includegraphics[width=30 mm]{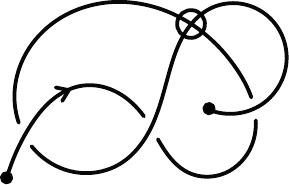}
            \caption{The virtual knotoid diagram 3.1.1.}
            \label{31}
        \end{subfigure}
        \begin{subfigure}{0.5\textwidth}
        \centering
            \includegraphics[width=25 mm]{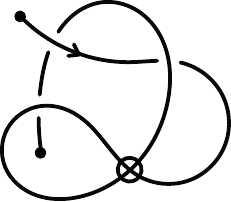}
            \caption{The virtual knotoid diagram 3.1.3.}
            \label{33}
        \end{subfigure}
        \caption{}
        \label{3133}
    \end{figure}

    The $X$-coloring matrices of these virtual knotoids are the same,
    \[\mathcal{M}_X(3.1.1)=\left[\begin{array}{c c c}
		1 & 0 & 0 \\
        0 & 1 & 0 \\
        0 & 0 & 1
	\end{array}\right]= \mathcal{M}_X(3.1.3).\]
    Consider the biquandle virtual bracket given in Example \ref{mbvb}. Our Python computations show that the $X$-virtual bracket matrices of these virtual knotoid diagrams are as follows.
    \[\mathcal{M}_X^{\beta}(3.1.1)=\left[\begin{array}{c c c}
		u^2 & 0 & 0 \\
        0 & u^2 & 0 \\
        0 & 0 & u^2
	\end{array}\right],\qquad \mathcal{M}_X^{\beta}(3.1.3)=\left[\begin{array}{c c c}
		u^3 & 0 & 0 \\
        0 & u^3 & 0 \\
        0 & 0 & u^2
	\end{array}\right].\]
    \end{example}
    Example \ref{bvb3133} shows that the biquandle virtual bracket matrix is a stronger invariant than the biquandle counting matrix for virtual knotoids.
\begin{example}
\label{comp}
    Consider the biquandle $X=\mathbb{Z}_3$ with operations defined as
    \[x\un y=x+1=x\ovr y\]
    for all $x,y\in X$.
    We use our Python code to generate the $X$-virtual bracket $\beta$ over $\mathbb{Z}_{37}$ with coefficient matrix
    \[\left[\begin{array}{c c c | c c c | c c c | c c c | c c c | c c c}
		7 & 0 & 0 \;&\;11 & 0 & 0 \;&\;0 & 6 & 5 \;&\;16 & 0 & 0 \;&\;27 & 0 & 0 \;&\;0 & 2 & 9 \\
		0 & 7 & 0 \;&\;0 & 11 & 0 \;&\;10 & 0 & 8 \;&\;0 & 16 & 0 \;&\;0 & 27 & 0 \;&\;10 & 0 & 7 \\
		0 & 0 & 7 \;&\;0 & 0 & 11 \;&\;1 & 2 & 0 \;&\;0 & 0 & 16 \;&\;0 & 0 & 27 \;&\;1 & 6 & 0
	\end{array}\right],\]
    and with $\delta=5$, $\omega=9$. 
    \begin{figure}[h!]
        \begin{subfigure}{0.5\textwidth}
        \centering
            \includegraphics[width=25 mm]{v21.pdf}
            \caption{2.1.1}
            \label{v21}
        \end{subfigure}
        \begin{subfigure}{0.5\textwidth}
        \centering
            \includegraphics[width=31 mm]{3.1.pdf}
            \caption{3.1.1}
            \label{v31}
        \end{subfigure}
        \caption{}
    \end{figure}
    Consider the virtual knotoids 2.1.1 and 3.1.1. We compute the values of the invariants mentioned above with respect to the biquandle $X$ and the $X$-virtual bracket $\beta$ for these two virtual knotoids using our Python code and obtain Table \ref{values}.

    \begin{table}[h!]
\centering
\[
\begin{tabular}{!{\vrule width 1.1pt}>{\centering\arraybackslash}m{2cm}!{\vrule width 1.1pt}>{\centering\arraybackslash}m{3cm}|>{\centering\arraybackslash}m{3cm}!{\vrule width 1.1pt}}
\specialrule{1.1pt}{0pt}{0pt}
 \rule{0pt}{13pt}& 2.1.1 & 3.1.1\\
\specialrule{1.1pt}{0pt}{0pt}
\rule{0pt}{16pt}$\Phi_X^{\mathbb{Z}}$ &3 &3 \\
\hline
\rule{0pt}{16pt}$\Phi_X^{\beta}$ &$u^{19}+u^{27}+u^{34}$ &$u^{19}+u^{27}+u^{34}$ \\
\hline
\rule{0pt}{40pt}$\mathcal{M}_X$&$\begin{bmatrix}
1 & 0 & 0 \\
0 & 1 & 0 \\
0 & 0 & 1
\end{bmatrix}$&$\begin{bmatrix}
1 & 0 & 0 \\
0 & 1 & 0 \\
0 & 0 & 1
\end{bmatrix}$\\[2.5em]
\hline
\rule{0pt}{40pt}$\mathcal{M}_X^{\beta}$&$\begin{bmatrix}
u^{27} & 0 & 0 \\
0 & u^{19} & 0 \\
0 & 0 & u^{34}
\end{bmatrix}$&
$\begin{bmatrix}
u^{19} & 0 & 0 \\
0 & u^{34} & 0 \\
0 & 0 & u^{27}
\end{bmatrix}$\\[2.5em]
\specialrule{1.1pt}{0pt}{0pt}

\end{tabular}
\]
\caption{Biquandle counting values and biquandle virtual bracket values of the virtual knotoids 2.1.1 and 3.1.1.}
\label{values}
\end{table}
It is seen in the table that the biquandle counting invariant, the biquandle counting matrix invariant and the biquandle virtual bracket polynomial invariant all fail to distinguish the virtual knotoids 2.1.1 and 3.1.1. However, the biquandle virtual bracket matrix invariant distinguishes these two virtual knotoids. This shows that the biquandle virtual bracket matrix is a proper enhancement of these three invariants of virtual knotoids.
    
    We compute the $X$-virtual bracket matrices of some of the virtual knotoids and obtain Table \ref{table}. Observe that the virtual knotoids in the table are not distinguished by the $X$-counting invariant or the $X$-counting matrix and more pairs of virtual knotoids as in Example \ref{comp} can be found in Table \ref{table}.
\end{example}
\begin{table}[h!]
\centering
\[
\begin{tabular}{c|m{4.5cm}||c|m{3.5cm}}
\toprule
$\Phi_X^{\beta}(K)$ & $K$ & $\Phi_X^{\beta}(K)$ & $K$ \\
\midrule
$\begin{bmatrix}
u^{27} & 0 & 0 \\
0 & u^{19} & 0 \\
0 & 0 & u^{34}
\end{bmatrix}$
&
2.1.1
&
$\begin{bmatrix}
u^{27} & 0 & 0 \\
0 & u^{21} & 0 \\
0 & 0 & u^{14}
\end{bmatrix}$
&
2.1.2
\\[2em]
$\begin{bmatrix}
u^{19} & 0 & 0 \\
0 & u^{34} & 0 \\
0 & 0 & u^{27}
\end{bmatrix}$
& 3.1.1, 3.1.4
&
$\begin{bmatrix}
u^{27} & 0 & 0 \\
0 & u^{19} & 0 \\
0 & 0 & u^{34}
\end{bmatrix}$
& 3.1.2, 3.1.3
\\[2em]

$\begin{bmatrix}
u^7 & 0 & 0 \\
0 & u^7 & 0 \\
0 & 0 & u^7
\end{bmatrix}$
& 3.1.5, 3.1.10
&
$\begin{bmatrix}
u^5 & 0 & 0 \\
0 & u^{32} & 0 \\
0 & 0 & u^5
\end{bmatrix}$
& 3.1.6
\\[2em]

$\begin{bmatrix}
u^5 & 0 & 0 \\
0 & u^5 & 0 \\
0 & 0 & u^{32}
\end{bmatrix}$
& 3.1.7
&
$\begin{bmatrix}
u^{34} & 0 & 0 \\
0 & u^{27} & 0 \\
0 & 0 & u^{19}
\end{bmatrix}$
& 3.1.8
\\[2em]

$\begin{bmatrix}
u^{32} & 0 & 0 \\
0 & u^5 & 0 \\
0 & 0 & u^5
\end{bmatrix}$
& 3.1.9
&
$\begin{bmatrix}
u^6 & 0 & 0 \\
0 & u^{36} & 0 \\
0 & 0 & u^{33}
\end{bmatrix}$
& 4.1.1
\\[2em]

$\begin{bmatrix}
u^{28} & 0 & 0 \\
0 & u^{4} & 0 \\
0 & 0 & u^{33}
\end{bmatrix}$
& 4.1.2
&
$\begin{bmatrix}
u^{30} & 0 & 0 \\
0 & u^{3} & 0 \\
0 & 0 & u^{6}
\end{bmatrix}$
& 4.1.3
\\[2em]

$\begin{bmatrix}
u^{27} & 0 & 0 \\
0 & u^{22} & 0 \\
0 & 0 & u^{30}
\end{bmatrix}$
& 4.1.4
&
$\begin{bmatrix}
u & 0 & 0 \\
0 & u^{32} & 0 \\
0 & 0 & u^{19}
\end{bmatrix}$
& 4.1.5
\\[2em]

$\begin{bmatrix}
u^{24} & 0 & 0 \\
0 & u^{20} & 0 \\
0 & 0 & u^{13}
\end{bmatrix}$
& 4.1.6
&
$\begin{bmatrix}
u^{31} & 0 & 0 \\
0 & u^{13} & 0 \\
0 & 0 & u
\end{bmatrix}$
& 4.1.7
\\[2em]

$\begin{bmatrix}
u^{36} & 0 & 0 \\
0 & u^{33} & 0 \\
0 & 0 & u^{6}
\end{bmatrix}$
& 4.1.8
&
$\begin{bmatrix}
u^{20} & 0 & 0 \\
0 & u^{17} & 0 \\
0 & 0 & u^{19}
\end{bmatrix}$
& 4.1.9
\\[2em]

$\begin{bmatrix}
u^{22} & 0 & 0 \\
0 & u^{30} & 0 \\
0 & 0 & u^{27}
\end{bmatrix}$
& 4.1.10
&
$\begin{bmatrix}
u^{19} & 0 & 0 \\
0 & u^{20} & 0 \\
0 & 0 & u^{17}
\end{bmatrix}$
& 5.1.1
\\[2em]

$\begin{bmatrix}
u^{24} & 0 & 0 \\
0 & u^{20} & 0 \\
0 & 0 & u^{13}
\end{bmatrix}$
& 5.1.2
&
$\begin{bmatrix}
u^{30} & 0 & 0 \\
0 & u^{3} & 0 \\
0 & 0 & u^{6}
\end{bmatrix}$
& 5.1.3
\\[2em]

\end{tabular}
\]
\end{table}
\begin{table}[h!]
\centering
\[
\begin{tabular}{c|m{4.5cm}||c|m{3.5cm}}

$\begin{bmatrix}
u^{19} & 0 & 0 \\
0 & u^{27} & 0 \\
0 & 0 & u^{17}
\end{bmatrix}$
&
5.1.4
&
$\begin{bmatrix}
u^{12} & 0 & 0 \\
0 & u^{3} & 0 \\
0 & 0 & u^{35}
\end{bmatrix}$
&
5.1.5
\\[2em]
$\begin{bmatrix}
u^{3} & 0 & 0 \\
0 & u^{23} & 0 \\
0 & 0 & u^{28}
\end{bmatrix}$
& 5.1.6
&
$\begin{bmatrix}
u^{35} & 0 & 0 \\
0 & u^{19} & 0 \\
0 & 0 & u^{25}
\end{bmatrix}$
& 5.1.7
\\[2em]

$\begin{bmatrix}
u^{20} & 0 & 0 \\
0 & u^{13} & 0 \\
0 & 0 & u^{24}
\end{bmatrix}$
& 5.1.8
&
$\begin{bmatrix}
u^{27} & 0 & 0 \\
0 & u^{31} & 0 \\
0 & 0 & u^{17}
\end{bmatrix}$
& 5.1.9
\\[2em]

$\begin{bmatrix}
u^{11} & 0 & 0 \\
0 & u^{15} & 0 \\
0 & 0 & u^{32}
\end{bmatrix}$
& 5.1.10
&

& 
\\
\bottomrule

\end{tabular}
\]
\caption{A table of biquandle virtual bracket matrices of virtual knotoids.}
\label{table}
\end{table}
\section{Discussion}

The biquandle virtual brackets are an infinite family of invariants for virtual knotoids as they are built on a choice of a biquandle $X$, a ground ring $R$, and a coefficient matrix. The computability and the strength of this invariant can be adjusted by these choices. Working even on a slightly large ring $R=\mathbb{Z}_{37}$, we obtain Table \ref{table}, where all virtual knotoids presented are distinguished except for three couples. However, as pointed out in \cite{nelson2019biquandle}, it is a difficult task to determine the coefficient maps $A,B,V,C,D,U$ satisfying the given axioms of a biquandle virtual bracket. In our calculations, we use some restrictions to make this task bearable, such as working with very small biquandles and picking the ground ring $R$ as a field so that the axioms are eased as the images of the coefficient maps are all invertible. 
Without these restrictions, we need efficient algorithms to be able to find biquandle virtual brackets over larger ground rings. It can be an interesting task to use these biquandle virtual brackets to extend Bartholomew's table of virtual knotoids in \cite{virk2}.

\clearpage
\bibliography{references}

\begin{thebibliography}{10}

\bibitem{adams2019knots}
Colin Adams, Allison Henrich, Kate Kearney, and Nicholas Scoville.
\newblock Knots related by knotoids.
\newblock {\em The American Mathematical Monthly}, 2019.

\bibitem{virk2}
Andrew Bartholomew.
\newblock A table of virtual links.
\newblock \url{https://www.layer8.co.uk/maths/virtual-links/index.htm}, 2022.

\bibitem{chmutov2024thistlethwaite}
Sergei Chmutov, Qingying Deng, Joanna~A Ellis-Monaghan, Sergei Lando, and Wout Moltmaker.
\newblock Thistlethwaite theorems for knotoids and linkoids.
\newblock {\em arXiv preprint arXiv:2412.12357}, 2024.

\bibitem{goundaroulis2017studies}
Dimos Goundaroulis, Julien Dorier, Fabrizio Benedetti, and Andrzej Stasiak.
\newblock Studies of global and local entanglements of individual protein chains using the concept of knotoids.
\newblock {\em Scientific reports}, 7(1):6309, 2017.

\bibitem{goundaroulis2017topological}
Dimos Goundaroulis, Neslihan G{\"u}g{\"u}mc{\"u}, Sofia Lambropoulou, Julien Dorier, Andrzej Stasiak, and Louis Kauffman.
\newblock Topological models for open-knotted protein chains using the concepts of knotoids and bonded knotoids.
\newblock {\em Polymers}, 9(9):444, 2017.

\bibitem{gugumcu2022invariants}
Neslihan G{\"u}g{\"u}mc{\"u}, Bostjan Gabrovsek, and Louis~H Kauffman.
\newblock Invariants of bonded knotoids and applications to protein folding.
\newblock {\em Symmetry}, 14(8):1724, 2022.

\bibitem{gugumcu2017new}
Neslihan G{\"u}g{\"u}mc{\"u} and Louis~H Kauffman.
\newblock New invariants of knotoids.
\newblock {\em European Journal of Combinatorics}, 65:186--229, 2017.

\bibitem{gugumcu2021parity}
Neslihan G{\"u}g{\"u}mc{\"u} and Louis~H Kauffman.
\newblock Parity, virtual closure and minimality of knotoids.
\newblock {\em Journal of Knot Theory and Its Ramifications}, 30(11):2150076, 2021.

\bibitem{gugumcu2021quantum}
Neslihan G{\"u}g{\"u}mc{\"u} and Louis~H Kauffman.
\newblock Quantum invariants of knotoids.
\newblock {\em Communications in Mathematical Physics}, 387(3):1681--1728, 2021.

\bibitem{gugumcu2025virtual}
Neslihan G{\"u}g{\"u}mc{\"u} and Hamdi Kayaslan.
\newblock Virtual knotoids in thickened surfaces.
\newblock {\em arXiv preprint arXiv:2502.18160}, 2025.

\bibitem{gugumcu2019biquandle}
Neslihan G{\"u}g{\"u}mc{\"u} and Sam Nelson.
\newblock Biquandle coloring invariants of knotoids.
\newblock {\em Journal of Knot Theory and its Ramifications}, 28(04):1950029, 2019.

\bibitem{gugumcu2021biquandle}
Neslihan G{\"u}g{\"u}mc{\"u}, Sam Nelson, and Natsumi Oyamaguchi.
\newblock Biquandle brackets and knotoids.
\newblock {\em Journal of Knot Theory and Its Ramifications}, 30(09):2150064, 2021.

\bibitem{kauffman2012introduction}
Louis~H Kauffman.
\newblock Introduction to virtual knot theory.
\newblock {\em Journal of Knot Theory and Its Ramifications}, 21(13):1240007, 2012.

\bibitem{moltmaker2022framed}
Wout Moltmaker.
\newblock Framed knotoids and their quantum invariants.
\newblock {\em Communications in Mathematical Physics}, pages 1--27, 2022.

\bibitem{nelson2017quantum}
Sam Nelson, Michael~E Orrison, and Veronica Rivera.
\newblock Quantum enhancements and biquandle brackets.
\newblock {\em Journal of Knot Theory and Its Ramifications}, 26(05):1750034, 2017.

\bibitem{nelson2019biquandle}
Sam Nelson, Kanako Oshiro, Ayaka Shimizu, and Yoshiro Yaguchi.
\newblock Biquandle virtual brackets.
\newblock {\em Journal of Knot Theory and Its Ramifications}, 28(11):1940003, 2019.

\bibitem{turaev2012knotoids}
Vladimir Turaev.
\newblock {Knotoids}.
\newblock {\em Osaka Journal of Mathematics}, 49(1):195 -- 223, 2012.

\end{thebibliography}
\bibliographystyle{plain}
	
\end{document}